\DeclareMathOperator\Char{Char}%
\DeclareMathOperator\diag{diag}%
\newcommand{\li}{\langle}
\newcommand{\re}{\rangle}
\newcommand{\lire}{\li\,\cdot\,,\cdot\,\re}
\newcommand{\quer}{\overline{\phantom{a}}}
\newcommand{\ol}[1]{\overline{#1}}
\newcommand{\x}{\times} 
\newcommand{\ulwedge}{\mathbin{\underline\wedge}} 
\newcommand{\rep}{\mathrel{\parallel_\mathrm{\,R}}}
\newcommand{\lip}{\mathrel{\parallel_\mathrm{\,L}}}
\newcommand{\Cp}{\mathrel{\parallel_{\,C}}}
\newcommand{\Cpp}{\mathrel{\parallel_{\,C^\perp}}}
\newcommand{\Ccp}{\mathrel{\parallel_{\,C'}}}
\newcommand{\reS}{\cS_\mathrm{R}}
\newcommand{\liS}{\cS_\mathrm{L}}
\newcommand{\CS}{\cS_{C}}
\newcommand{\trenn}{{-\hspace{0pt}}}
\newcommand{\bPU}{\bP(U)} 
\newcommand{\bPV}{\bP(V)} 
\newcommand{\bPH}{\bP(H)} 
\newcommand{\bPVV}{{\bP(V\wedge V)}} 
\newcommand{\bPHH}{{\bP(H\wedge H)}} 
\newcommand{\bGV}{\bG_{1}(V)}
\newcommand{\cPV}{\cP(V)} 
\newcommand{\cLV}{\cL(V)} 
\newcommand{\cLH}{\cL(H)} 
\newcommand{\cPVV}{{\cP(V\wedge V)}} 
\renewcommand{\phi}{\varphi}
\newcommand{\cE}{{\mathcal E}}
\newcommand{\cF}{{\mathcal F}}
\newcommand{\cH}{{\mathcal H}}
\newcommand{\cL}{{\mathcal L}}
\newcommand{\cP}{{\mathcal P}}
\newcommand{\cQ}{{\mathcal Q}}
\newcommand{\cS}{{\mathcal S}}
\newcommand{\cR}{{\mathcal R}}
\newcommand{\bP}{{\mathbb P}}
\newcommand{\bG}{{\mathbb G}}
\newtheorem{thm}{Theorem}[section]
\newtheorem{prop}[thm]{Proposition}
\newtheorem{cor}[thm]{Corollary}
\newtheorem{lem}[thm]{Lemma}
\theoremstyle{definition}
\newtheorem{defn}[thm]{Definition}
\theoremstyle{remark} 
\newtheorem{rem}[thm]{Remark}
\numberwithin{equation}{section}
\renewcommand{\theenumi}{\alph{enumi}}%
\begin{document}


\author{Hans Havlicek}
\title{Clifford Parallelisms and\\ External Planes to the Klein quadric}
\date{\emph{Dedicated to the memory of G\"{u}nter Pickert}}

\maketitle

\begin{abstract}
For any three{\trenn}dimensional projective space $\bPV$, where $V$ is a vector
space over a field $F$ of arbitrary characteristic, we establish a one-one
correspondence between the Clifford parallelisms of $\bPV$ and those planes of
$\bPVV$ that are external to the Klein quadric representing the lines of
$\bPV$. We also give two characterisations of a Clifford parallelism of $\bPV$,
both of which avoid the ambient space of the Klein quadric.
\par~\par\noindent
\textbf{Mathematics Subject Classification (2010):} 51A15, 51J15 \\
\textbf{Key words:} Projective double space, Clifford parallelism, Klein
quadric, Pl\"{u}cker embedding, geometric hyperplane, condition of crossed pencils
\end{abstract}


\section{Introduction}\label{sect:intro}

A theory of \emph{Clifford parallelisms}\/ on the line set of a projective
space over a field $F$ was developed already several decades ago; see
\cite[pp.\,112--115]{john-03a} and \cite[\S\,14]{karz+k-88a} for a detailed
survey. This theory is based upon \emph{projective double spaces}\/ and their
algebraic description via left and right multiplication in two classes of
$F${\trenn}algebras: \eqref{A}~\emph{quaternion skew fields}\/ with centre $F$
and \eqref{B}~\emph{purely inseparable field extensions of $F$}\/ satisfying
some extra property. More precisely, this theory is built up from \emph{two}\/
parallelisms, hence the name ``double space''. These parallelisms are identical
in case \eqref{B} and distinct otherwise. It is common to distinguish between
them by adding the appropriate attribute ``left'' or ``right''.
\par
In the present paper we aim at giving characterisations of a \emph{single}\/
Clifford parallelism. We confine ourselves to the three{\trenn}dimensional
case. First, we collect the necessary background information in some detail, as
it is widespread over the literature. In Section~\ref{sect:external}, we
consider a three{\trenn}dimensional projective space $\bPV$ on a vector space
$V$ over a field $F$ (of arbitrary characteristic) and the Klein quadric $\cQ$
representing the lines of $\bPV$. Each \emph{external plane to the Klein
quadric}, \emph{i.\,e.}\/ a plane in the ambient space of $\cQ$ that has no
point in common with $\cQ$, is shown to give rise to a parallelism of $\bPV$.
Polarising such a plane with respect to the Klein quadric yields a second
plane, which is also external to $\cQ$, and hence a second parallelism. We
establish in Theorem~\ref{thm:doublespace} that these parallelisms turn $\bPV$
into a projective double space, which therefore can be described algebraically
in the way we sketched above. The main result in Section~\ref{sect:Cliff-Klein}
is Theorem~\ref{thm:C+C'}, where we reverse the previous construction. What
this all amounts to is a \emph{one-one correspondence between Clifford
parallelisms and planes that are external to the Klein quadric}. Finally, in
Section~\ref{sect:character} we present two characterisations of a Clifford
parallelism. Both of them avoid the ambient space of the Klein quadric, even
though our proofs rely on the aforementioned correspondence.
Theorem~\ref{thm:hyper} characterises a parallelism as being Clifford via the
property that any two distinct parallel classes are contained in a
\emph{geometric hyperplane}\/ of the Grassmann space formed by the lines of
$\bPV$. A second characterisation is given in Theorem~\ref{thm:crossed}; it
makes use of a presumably new criterion, namely what we call the
\emph{condition of crossed pencils}.
\par
When dealing with Clifford parallelisms in general, one must not disregard the
rich literature about the classical case over the real numbers. The recent
articles \cite{bett+r-12a} and \cite{cogl-15a} provide a detailed survey;
further sources can be found in \cite[p.\,10]{blasch-60a} and \cite{coxe-98a}.
Some of the classical results remain true in a more general setting, even
though not in all cases. For example, a quadric without real points will carry
two reguli after its complexification. Any such regulus can be used to
characterise a Clifford parallelism in the real case. By \cite{blunck+p+p-07a},
this result still applies (with some subtle modifications) in case~\eqref{A}
but, according to \cite{havl-15a}, it fails in case~\eqref{B}. A crucial
question is therefore to find classical results that can be generalised without
limitation. Many of our findings are of this kind, and we shall give references
to related work over the real numbers in the running text.
\par
Finally, for the sake of completeness, let us mention that
higher{\trenn}dimensional analogues of Clifford parallelisms can be found in
\cite{gier-82a} and \cite{tyrr+s-71a}.

\section{Preliminaries}\label{sect:prelim}

Let $U$ be a vector space over a field\footnote{We assume the multiplication in
a field to be commutative, in a skew field it may be commutative or not.} $F$.
It will be convenient to let the \emph{projective space}\/ $\bPU$ be the set of
all subspaces of $U$ with \emph{incidence}\/ being symmetrised inclusion. We
adopt the usual geometric terms: If $Z\subset U$ is a subspace of $U$ with
vector dimension $k+1$ then its \emph{projective dimension}\/ is $k$.
\emph{Points}, \emph{lines}, \emph{planes}, and \emph{solids}\/ are the
subspaces of $U$ with vector dimension one, two, three, and four, respectively.
For any subspace $Z\subset U$ we let $\cP(Z)$ and $\cL(Z)$ be the set of all
points and lines, respectively, that are contained in $Z$. If $Y$ and $Z$ are
subspaces such that $Y\subset Z\subset U$ then $\cL(Y,Z):=\{M\in\cL(Z)\mid
Y\subset M \}$. In particular, if $p\subset U$ is a point then $\cL(p,U)$ is
the \emph{star of lines}\/ with centre $p$. If, furthermore, $Z\subset U$ is a
plane incident with $p$ then $\cL(p,Z)$ is a \emph{pencil of lines}. We now
could formalise the given projective space in terms of points and lines, but
refrain from doing so.
\par
For the rest of the article, it will be assumed that $V$ is a vector space over
$F$ with vector dimension four, and we shall be concerned with the projective
space $\bPV$.
\par
The exterior square $V\wedge V$ has vector dimension six and gives rise to the
projective space $\bPVV$. (All the multilinear algebra we need can be found in
standard textbooks, like \cite[Sect.\,10.4]{kowa+m-95a} or
\cite[Sect.\,6.8]{kost+m-89a} to mention but a few.) Upon choosing any basis
$e_0,e_1,e_2,e_3$ of $V$, the six bivectors $e_{\sigma\tau}=e_\sigma\wedge
e_\tau $, $0 \leq \sigma < \tau \leq 3$ constitute a basis of $V\wedge V$.
Writing vectors in the form $u=\sum_{\sigma=0}^3 u_\sigma e_\sigma$,
$v=\sum_{\tau=0}^3 v_\tau e_\tau$ with $u_\sigma,v_\tau\in F$ gives
\begin{equation}\label{eq:dachkoo}
    u\wedge v =\sum_{\sigma<\tau} (u_\sigma v_\tau-u_\tau v_\sigma) e_{\sigma\tau} .
\end{equation}
The following results can be found, among others, in
\cite[Sect.\,11.4]{blunck+he-05a}, \cite[Sect.\,15.4]{hirsch-85a},
\cite[Ch.\,34]{pick-76a}, and \cite[Ch.\,xv]{semp+k-98a}. The \emph{Pl\"{u}cker
embedding}
\begin{equation}\label{eq:Pluecker}
    \gamma\colon \cLV\to\cPVV\colon  M\mapsto F(u\wedge v)
\end{equation}
assigns to each line $M\in\cLV$ the point $F(u\wedge v)$, where $u,v\in M$ are
arbitrary linearly independent vectors. If $u$ and $v$ are expressed as above
then the six coordinates appearing on the right hand side of \eqref{eq:dachkoo}
are the \emph{Pl\"{u}cker coordinates}\/ of the line $M$. The image
$\cLV^\gamma=:\cQ$ is the well known \emph{Klein quadric}\/ representing the
lines of $\bPV$. It is given, in terms of coordinates $x_{\sigma\tau}\in F$, by
the quadratic form
\begin{equation}\label{eq:Klein}
    \omega\colon  V\wedge V\to F \colon  \sum_{\sigma<\tau}x_{\sigma\tau}e_{\sigma\tau}
    \mapsto x_{01}x_{23}-x_{02}x_{13}+x_{03}x_{12} .
\end{equation}
Polarisation of $\omega$ gives the non{\trenn}degenerate symmetric bilinear
form
\begin{equation}\label{eq:Kleinpolar}
    \lire \colon  (V\wedge V)^2 \to F \colon  (x,y)\mapsto (x+y)^\omega - x^\omega - y^\omega ,
\end{equation}
whose explicit expression in terms of coordinates is immediate from
\eqref{eq:Klein}. Using the exterior algebra $\bigwedge V$, the form $\lire$
can be characterised via
\begin{equation*}
    x\wedge y = \li x,y \re \,
    (e_0\wedge e_1\wedge e_2\wedge e_3)
    \mbox{~~for all~~} x,y\in V\wedge V\/.
\end{equation*}
The bilinear form $\lire$ defines the \emph{polarity of the Klein quadric},
which sends any subspace $X \subset V\wedge V$ to $X^\perp$. One crucial
property of $\perp$ is as follows: Lines $M,N\in\cLV$ have a point in common
precisely when $M^\gamma$ and $N^\gamma$ are \emph{conjugate points}\/ with
respect to $\perp$, \emph{i.\,e.}, $N^\gamma\subset M^{\gamma\perp}$ (or vice
versa).
\par
A \emph{linear complex of lines}\/ of $\bPV$ is a set $\cH\subset\cL(V)$ whose
image $\cH^\gamma$ is a \emph{hyperplane section}\/ of $\cQ$, \emph{i.\,e.},
$\cH^\gamma=\cP(W)\cap \cQ$ for some hyperplane $W\subset V\wedge V$. The
complex is called \emph{special}\/ if $W$ is tangent to the Klein quadric, and
\emph{general}\/ otherwise. A subset of $\cLV$ is a general linear complex if,
and only if, it is the set of null lines of a null polarity. All lines that
meet or are equal to a fixed line $A\in\cLV$ constitute a special linear
complex with \emph{axis} $A$; all special linear complexes of $\bPV$ are of
this form. A \emph{linear congruence of lines}\/ of $\bPV$ corresponds---via
$\gamma$--- to the section of $\cQ$ by a solid, say $T$. Such a congruence is
said to be \emph{elliptic}\/ if its image under $\gamma$ is an elliptic (or:
\emph{ovoidal}\/ \cite[2.1.4]{schroe-95a}) quadric of $\bP(T)$. This will be
the case precisely when the line $T^\perp$ has no point in common with $\cQ$.
The elliptic linear congruences are precisely the \emph{regular spreads}. There
are three more types of linear congruences, but they will not be needed here.
The Klein quadric has two systems of generating planes. For any plane $G$ of
the first (resp.\ second) system there is a unique point $p$ (resp.\ plane $Z$)
in $\bPV$ such that $\cL(p,V)^\gamma = \cP(G)$ \big(resp.\ $\cL(Z)^\gamma =
\cP(G)$\big). Finally, we recall that a set $\cR\subset \cL(V)$ is a
\emph{regulus}\/ if, and only if, there is a plane $E$ of $\bPVV$ such that
$\cR^\gamma=\cP(E)\cap \cQ$ is a non{\trenn}degenerate conic. For a quick
overview and a detailed description of these and other linear sections of the
Klein quadric we refer to the table in \cite[pp.\,29--31]{hirsch-85a}, even
though over an infinite field some modifications may apply.
\par
The \emph{Grassmann space}\/ $\bGV:=(\cLV,\Pi(V))$ is that partial linear space
whose ``point set'' is the set $\cLV$ of lines of $\bPV$ and whose ``line set''
is the set $\Pi(V)$ of all pencils of lines in $\bPV$. See, for example,
\cite[p.\,71]{pank-10a}. For the sake of readability we henceforth shall
address the ``points'' and ``lines'' of $\bGV$ by their original names. A
\emph{geometric hyperplane} $\cH$ of the Grassmann space $\bGV$ is a proper
subset of $\cLV$ such that each pencil of lines either contains a single
element of $\cH$ or is entirely contained in $\cH$. The geometric hyperplanes
of $\bGV$, which are also called \emph{primes}, are precisely the linear
complexes of lines of $\bPV$. Many proofs were given for this result (and its
generalisation to other Grassmann spaces) \cite{bich+m+t+z-91a},
\cite{debr-09a}, \cite{dere-84a} (finite ground field, see also
\cite[Thm.\,15.2.14]{hirsch-85a}), \cite[p.\,179]{havl-81b} (rephrased in
\cite[Prop.\,3]{havl+z-08a}), and \cite{shult-92a}.

\section{Parallelisms}\label{sect:parallel}
\par
Let $(\cP,\cL)$ be a projective space with point set $\cP$ and line set $\cL$.
An equivalence relation ${\parallel}\subset{\cL\x\cL}$ is a
\emph{parallelism}\/ if each point $p\in\cP$ is incident with precisely one
line from each equivalence class. The equivalence classes of $\parallel$ are
also called \emph{parallel classes}. For further information see
\cite{john-03a} and \cite{john-10a}.
\par

One class of parallelisms is based on the following notions: Let $H$ be an
algebra over a field $F$ such that one of the subsequent
conditions\footnote{Below we identify $F$ with $F\cdot 1_H\subset H$ via
$f\equiv f\cdot 1_H$ for all $f\in F$.} is satisfied:
\begin{enumerate}
\renewcommand{\theenumi}{\Alph{enumi}}
\item\label{A} $H$ is a quaternion skew field with centre $F$.
\item\label{B} $H$ is an extension field of $F$ with degree $[H \mathbin{:}
    F]=4$ and such that $h^2\in F$ for all $h\in H$.
\end{enumerate}
In both cases, $H$ is an infinite \emph{quadratic}\/ (or: \emph{kinematic}\/)
$F${\trenn}algebra, \emph{i.\,e.}, $h^2\in F + Fh$ for all $h\in H$.

\begin{rem}
Let us briefly recall a few facts about the $F${\trenn}algebras appearing in
conditions \eqref{A} and \eqref{B}:
\par
Ad \eqref{A}: Any \emph{quaternion skew field} $H$ with centre $F$ arises as
follows \cite[pp.\,46--48]{tits+w-02a}: We start with a separable quadratic
field extension $K/F$ and denote by $\quer\colon K\to K\colon z\mapsto \ol z$
the only non{\trenn}trivial automorphism of $K$ that fixes $F$ elementwise.
Also, we assume that there is an element $b\in F$ satisfying $b\neq z\ol z$ for
all $z\in K$. Then $(K/F,b)$ is the subring of the matrix ring $K^{2\x 2}$
comprising all matrices of the form
\begin{equation}\label{eq:quat}
    \begin{pmatrix}
    z & w \\ b \ol w & \ol z
    \end{pmatrix}
    \mbox{~~with~~}z,w\in K \mbox{~~arbitrary}.
\end{equation}
We identify $z\in K$ with the matrix $\diag( z,\ol z)\in (K/F,b)$ and, finally,
we adopt the notation $H:=(K/F,b)$.
\par
The \emph{conjugation}\/ $\quer \colon H\to H$ is that antiautomorphism of $H$
which takes a quaternion as in \eqref{eq:quat} to
\begin{equation}\label{eq:conjug}
    \begin{pmatrix}
    \ol z & -w \\ -b \ol w &  z
    \end{pmatrix},
\end{equation}
thereby extending the mapping $\quer$ from above, whence the notation is
unambiguous. The multiplicative \emph{norm function} $H\to F\colon h\mapsto
h\ol h =\ol h h $ is a quadratic form on the $F${\trenn}vector space $H$. The
$F${\trenn}linear form $H\to F\colon h\mapsto h+\ol h$ is the \emph{trace
function}. Any $h\in H$ satisfies the quadratic equation $h^2 -(h+\ol h)h +
h\ol h = 0$ with coefficients in $F$. Upon choosing any $i\in K\setminus F$ the
matrices
\begin{equation}\label{eq:basis}
    1= \begin{pmatrix}
    1 & 0 \\ 0 & 1
    \end{pmatrix},\;\;
    i=\begin{pmatrix}
    i & 0 \\ 0  & \ol i
    \end{pmatrix},\;\;
    j:=\begin{pmatrix}
    0 & 1 \\ b  & 0
    \end{pmatrix},\;\;
    k:=\begin{pmatrix}
    0 & i \\ b \ol i & 0
    \end{pmatrix}
\end{equation}
constitute a basis of $H$ over $F$. It is conventional to choose $i$ as
follows:

\begin{enumerate}
\renewcommand{\theenumi}{A\arabic{enumi}}\renewcommand{\labelsep}{0.6ex}%
\item\label{A1} $\Char F\neq 2$: We may assume that $i^2-a=0$ for some
    $a\in F\setminus\{0\}$, whence $\ol i = -i$, $\ol j=-j$, and $\ol
    k=-k$. This gives the formulas
\begin{equation}\label{eq:mult1}
\begin{aligned}
    i^2&= a,         &j^2&= b,         & k^2&=-ab, \\
    ij &= - ji = k,  &jk &= - kj = -bi,& ki &=-ik=-aj.
\end{aligned}
\end{equation}

\item\label{A2} $\Char F = 2$: We may assume that $i^2+i+a=0$ for some
    $a\in F\setminus\{0\}$, whence $\ol i = i+1$, $\ol j=j$, and $\ol k=k$.
    Now we obtain
\begin{equation}\label{eq:mult2}
\begin{aligned}
    i^2&= i+a,           &j^2& = b,             & k^2&= ab, \\
    ij &= k,             &jk & = b+bi,          & ki &= aj,\\
    ji &=j+k,            &kj & = bi,            & ik &=aj+k.
\end{aligned}
\end{equation}
\end{enumerate}
We refer to \cite[p.\,169]{pick-75a} for a different basis of $H$ over $F$. It
has the advantage to be applicable in any characteristic for quaternion skew
fields, but using it would not allow us to incorporate case \eqref{B} in the
way we do below. See also \cite{maeu-99a} for a characterisation of arbitrary
quaternion skew fields. An analogous characterisation for real quaternions may
be found in \cite[p.\,43]{graeu-62a}.
\par
Ad \eqref{B}: Here $\Char F=2$, since there exists an $h\in H\setminus F$ for
which $(1+h)^2=1+2h+h^2\in F$ implies $2h\in F$, and so $2=0$. The field $H$ is
a purely inseparable extension of $F$, and it fits formally into the
description from \eqref{A1} if we proceed as follows: First, we select
arbitrary elements $i,j\in H$ such that $1,i,j$ are linearly independent over
$F$. Next, we let $k:=ij$, $a:=i^2\in F\setminus\{0\}$, $b:=j^2\in
F\setminus\{0\}$, and we regard the identity mapping as being the
\emph{conjugation}\/ $\quer\colon H\to H$. Then, taking into account that minus
signs can be ignored due to $\Char F=2$, the multiplication in $H$ is given by
the formulas in \eqref{eq:mult1}. (One may also carry over formulas
\eqref{eq:quat} \eqref{eq:conjug}, and \eqref{eq:basis} by letting
$K:=F(i)\subset H$ and $\ol z:=z$ for all $z\in K$.) The \emph{norm}\/ and
\emph{trace}\/ of $h\in H$ are defined as for quaternions. So the norm of $h\in
H$ is $h^2$ and its trace is $h+h=0$. The polar form of the quadratic norm form
$H\to F$ is the zero bilinear form.
\end{rem}

We now consider the (three{\trenn}dimensional) projective space $\bPH$ on the
$F$-vector space $H$. Our next definition, which follows
\cite[pp.\,112--115]{john-03a} and \cite[\S\,14]{karz+k-88a}, makes use of the
multiplicative group $H\setminus\{0\}$.

\begin{defn}\label{def:Clifford}
Given a pair $(M,N)$ of lines of the projective space $\bPH$ we say that $M$ is
\emph{left parallel}\/ to $N$, in symbols $M\lip N$, if there is an element
$c\in H\setminus\{0\}$ such that $cM=N$. Similarly, $M$ is said to be
\emph{right parallel}\/ to $N$, in symbols $M\rep N$, if $Mc=N$ for some $c\in
H\setminus\{0\}$.
\end{defn}
The relations $\lip$ and $\rep$ are parallelisms, which are identical precisely
in case~\eqref{B} \cite[\S~14]{karz+k-88a}. The left and right parallel class
of any line $M$ of $\bPH$ will be written as $\liS(M)$ and $\reS(M)$,
respectively.
\begin{defn}\label{def:Cliff-canon}
The parallelisms $\lip$ and $\rep$ are called the \emph{canonical Clifford
parallelisms}\/ of $\bPH$.
\end{defn}

Finally, we extend the previous definition to a projective space $\bPV$ as in
Section~\ref{sect:prelim}:

\begin{defn}\label{def:Cliff-allg}
A parallelism $\parallel$ of a three{\trenn}dimensional projective space $\bPV$
is said to be \emph{Clifford}\/ if the $F$-vector space $V$ can be made into an
$F${\trenn}algebra $H:=V$ subject to \eqref{A} or \eqref{B} such that the given
parallelism $\parallel$ coincides with one of the canonical Clifford
parallelisms of $\bPH$.
\end{defn}

\section{External planes to the Klein quadric}\label{sect:external}

In this section we adopt the settings from Section~\ref{sect:prelim}. Our
starting point is a very simple one, namely that of a plane $C$ in $\bPVV$
external to the Klein quadric. In other words, $C$ has to satisfy the following
property:
\begin{equation}\label{eq:ext}
    C \mbox{~~has no point in common with the Klein quadric~~}\cQ.
\end{equation}
The restriction to $C$ of the quadratic form $\omega$ from \eqref{eq:Klein}
defines a quadric without points in the projective plane $\bP(C)$.
Consequently, a plane of this kind cannot exist over certain fields, like
quadratically closed fields or finite fields; see \cite[p.\,4]{hirsch+th-91a}.
The following simple lemma will be used repeatedly.
\begin{lem}\label{lem:schnitt}
Let $G$ be a plane that lies entirely on the Klein quadric $\cQ$, and let
$T\supset C$ be a subspace of\/ $\bPVV$ with projective dimension $k$. Then
$G\cap T$ has projective dimension $k-3$.
\end{lem}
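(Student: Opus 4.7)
The plan is to use the projective dimension formula $\dim(X\cap Y)+\dim(X+Y)=\dim X+\dim Y$ (valid in $\bPVV$ with the usual convention $\dim\emptyset=-1$), applied twice. The ambient space $\bPVV$ has projective dimension $5$, the external plane $C$ and the generating plane $G\subset\cQ$ both have projective dimension $2$, and $T$ has projective dimension $k\ge 2$ (since $T\supset C$).

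First I would observe that $G\cap C=\emptyset$ as projective subspaces: every point of $G$ lies on the Klein quadric $\cQ$, whereas by \eqref{eq:ext} no point of $C$ does. Feeding this into the dimension formula gives
\begin{equation*}
    \dim(G+C)=\dim G+\dim C-\dim(G\cap C)=2+2-(-1)=5,
\end{equation*}
so $G$ and $C$ span all of $\bPVV$.

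Next, since $T$ contains $C$, the join satisfies $G+T\supset G+C=\bPVV$, hence $G+T=\bPVV$ and $\dim(G+T)=5$. Applying the dimension formula once more yields
\begin{equation*}
    \dim(G\cap T)=\dim G+\dim T-\dim(G+T)=2+k-5=k-3,
\end{equation*}
which is the claim.

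There is no real obstacle here; the only point requiring a moment's thought is the disjointness $G\cap C=\emptyset$, which is immediate from the definitions but is what makes the argument work. Everything else is bookkeeping with the Grassmann dimension formula.
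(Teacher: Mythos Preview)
Your proof is correct and follows essentially the same route as the paper's: observe that $G$ and $C$ are disjoint by \eqref{eq:ext}, deduce that they span all of $V\wedge V$, hence so do $G$ and $T\supset C$, and then apply the dimension formula once more. The only cosmetic difference is that the paper phrases everything in terms of vector subspaces (writing $G\cap C=0$ and $V\wedge V=G\oplus C$) in keeping with its convention that projective subspaces \emph{are} vector subspaces, whereas you use the $\dim\emptyset=-1$ convention.
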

\begin{proof}
From \eqref{eq:ext} we obtain $G\cap C=0$, whence $V\wedge V=G\oplus C$ by the
dimension formula. Consequently, $V\wedge V=G+T$ and applying again the
dimension formula proves the assertion.
\end{proof}

We now use the given plane $C$ and the Pl\"{u}cker embedding $\gamma$ from
\eqref{eq:Pluecker} to define a parallelism of the projective space $\bPV$.

\begin{defn}\label{def:C-par}
Given any pair $(M,N)$ of lines in the projective space $\bPV$ we say that $M$
is \emph{$C${\trenn}parallel}\/ to $N$, in symbols $M\Cp N$, if $ C+M^\gamma =
C+N^\gamma$. In addition, we define $\CS(M):=\{X\in \cL(V) \mid M \Cp X \}$.
\end{defn}

\begin{prop}\label{prop:par}
The relation\/ $\Cp$ is a parallelism of\/ $\bPV$. All its parallel classes are
regular spreads.
\end{prop}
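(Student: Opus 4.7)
The plan is to first note that $\Cp$ is trivially an equivalence relation: reflexivity, symmetry, and transitivity all follow immediately from the defining equation $C+M^\gamma=C+N^\gamma$. The substantive work lies in verifying the parallelism axiom (each point of $\bPV$ meets exactly one line from each class) and in identifying the classes as regular spreads.

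Given $M\in\cLV$, I would set $T:=C+M^\gamma$. Since $C$ is external to $\cQ$, the point $M^\gamma$ does not lie in $C$, so $T$ is a solid (projective dimension $3$). By Definition~\ref{def:C-par}, the class $\CS(M)$ corresponds bijectively, via $\gamma$, to $\cP(T)\cap\cQ$; in particular, $\CS(M)$ is a linear congruence of lines.

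To establish the parallelism axiom, fix any point $p\in\bPV$. The lines of $\bPV$ through $p$ are Pl\"ucker-mapped onto the points of a unique generator plane $G_p$ of the first system, so $\cL(p,V)^\gamma=\cP(G_p)$, as recalled in Section~\ref{sect:prelim}. Lemma~\ref{lem:schnitt}, applied to $T$ with $k=3$, shows that $G_p\cap T$ has projective dimension $0$, i.e.\ is a single point. This point lies in $\cQ\cap T$, so it is the Pl\"ucker image of the unique line through $p$ contained in $\CS(M)$. Hence $\CS(M)$ is a spread.

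For the regularity, I would argue that $\cP(T)\cap\cQ$ contains no projective line: any such line would be contained in some generator plane $G$ of $\cQ$, hence in $G\cap T$, contradicting the fact that $G\cap T$ has projective dimension $0$ by Lemma~\ref{lem:schnitt}. Among the non-empty linear sections of $\cQ$ by a solid, the only one containing no line (apart from a single-point quadric, which is ruled out because the spread $\CS(M)$ contains more than one line) is the ovoidal (elliptic) quadric. Thus $\CS(M)$ is the regular spread associated with an elliptic linear congruence, in the sense recalled in Section~\ref{sect:prelim}. I anticipate no significant obstacle; the only point requiring care is confirming that $T$ is genuinely a solid, which follows at once from the externality of~$C$.
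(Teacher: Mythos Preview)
Your proposal is correct and follows essentially the same approach as the paper: both treat the equivalence-relation part as trivial, invoke Lemma~\ref{lem:schnitt} on a generator plane of $\cQ$ to get the unique line through a given point, and then argue that the section $\cQ\cap T$ is an elliptic quadric because it has more than one point yet contains no line. The only minor difference is in the ``no line'' step: you pass through a generator plane and reapply Lemma~\ref{lem:schnitt}, whereas the paper observes directly that any line of the solid $T$ must meet the plane $C\subset T$ (dimension count), which would put a point of $\cQ$ on $C$ and contradict \eqref{eq:ext}.
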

\begin{proof}
Obviously, $\Cp$ is an equivalence relation on $\cL(V)$. Given any line $M$ and
any point $p$ in $\bPV$ we consider the star $\cL(p,V)$. The image
$\cL(p,V)^\gamma$ is the point set of a plane, say $G$, lying entirely on
$\cQ$. By Lemma~\ref{lem:schnitt}, $G\cap (C+M^\gamma)$ is a single point,
whose preimage under $\gamma$ is the only line through $p$ that is
$C${\trenn}parallel to $M$.

\par
Consider the parallel class $\CS(M)$ of any line $M\in\cL(V)$. Then $\CS(M)$ is
a spread of $\bPV$. By definition, the image $\CS(M)^\gamma$ is that quadric in
the solid $C+M^\gamma$ which arises as section of the Klein quadric by
$C+M^\gamma$. Since $\CS(M)$ is a spread, the quadric $\CS(M)^\gamma$ contains
more than one point. From \eqref{eq:ext} there cannot be a line on
$\CS(M)^\gamma$. So the quadric $\CS(M)^\gamma$ is elliptic. This shows that
$\CS(M)$ is a regular spread.
\end{proof}

See \cite[Lemma\,9]{bett+r-05a} and \cite[Def.\,1.10]{bett+r-12a} for a version
of Proposition~\ref{prop:par} in the classical context. Planes that are
external to the Klein quadric arise naturally in elliptic line geometry (over
the real numbers) \cite[pp.\,339--342]{weiss-82a}. Proposition~\ref{prop:par}
appears also in \cite[Sect.\,3]{havl-95c} in the setting of \emph{generalised
elliptic spaces}. However, our current approach shows that the elliptic
polarity used there is superfluous when exhibiting a Clifford parallelism on
its own. Even more, avoiding such an elliptic polarity in $\bPV$ allows us to
treat the subject in full generality, whereas \cite{havl-95c} will not tell us
anything about the first case in
Proposition~\ref{prop:Cperp}~\eqref{prop:Cperp3} below.

\begin{prop}\label{prop:bijektion}
For each parallel class\/ $\CS$ of\/ $\Cp$ there is a unique solid, say $T$,
in\/ $\bPVV$ such that the section of the Klein quadric by $T$ equals
$\CS^\gamma$. Furthermore, the plane $C$ is contained in any such $T$.
Conversely, any solid in\/ $\bPVV$ that contains the plane $C$ arises in this
way from precisely one parallel class of\/ $\Cp$.
\end{prop}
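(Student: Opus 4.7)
The plan is to construct the solid $T$ explicitly from any representative line of the given parallel class, then handle uniqueness and the converse separately. Concretely, start with $\CS = \CS(M)$ for some $M\in\cL(V)$ and set $T := C + M^\gamma$. Because $C$ is external to $\cQ$ and $M^\gamma\in\cQ$, the point $M^\gamma$ does not lie in $C$, so $T$ is a solid containing $C$. By Definition~\ref{def:C-par}, a line $X\in\cL(V)$ belongs to $\CS(M)$ if and only if $C+X^\gamma = C+M^\gamma = T$; since $X^\gamma$ automatically lies on $\cQ$, this is equivalent to $X^\gamma\in T\cap\cQ$. Hence $\CS^\gamma = T\cap\cQ$, which settles existence.

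For the uniqueness of $T$, I would invoke the fact (established in the proof of Proposition~\ref{prop:par}) that $\CS^\gamma$ is an elliptic, i.e.\ ovoidal, quadric of the solid $T$. Any such quadric spans the ambient solid, so if $T'$ is another solid of $\bPVV$ satisfying $T'\cap\cQ = \CS^\gamma$, then $T'$ contains the span of $\CS^\gamma$, which is $T$; as both are solids, $T' = T$. Since the unique $T$ is of the form $C+M^\gamma$, the inclusion $C\subseteq T$ is automatic.

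For the converse, let $T\supseteq C$ be any solid in $\bPVV$. The decisive step is to show that $T$ meets $\cQ$, and here I would apply Lemma~\ref{lem:schnitt} with $k=3$ and $G$ any plane of either ruling on $\cQ$: it yields that $G\cap T$ has projective dimension $0$, hence is a single point lying on $\cQ$. Pick a line $M\in\cL(V)$ with $M^\gamma\in T\cap\cQ$. Again $M^\gamma\notin C$, so $C+M^\gamma$ is a solid contained in $T$, which forces $C+M^\gamma = T$ by dimension. Consequently $T\cap\cQ = (C+M^\gamma)\cap\cQ = \CS(M)^\gamma$, and the uniqueness of the parallel class follows from the injectivity of the Pl\"{u}cker embedding $\gamma$. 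The main conceptual point — and the only step that is not routine bookkeeping — is this use of Lemma~\ref{lem:schnitt}, which ensures that every solid through $C$ automatically hits $\cQ$ and therefore corresponds to an entire parallel class rather than to a possibly empty quadric section.
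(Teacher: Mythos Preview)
Your argument is correct and follows essentially the same route as the paper: construct $T=C+M^\gamma$, use that $\CS^\gamma$ is an elliptic quadric spanning $T$ for uniqueness, and apply Lemma~\ref{lem:schnitt} with a generating plane $G$ of $\cQ$ to produce the required line $M$ in the converse direction. The only differences are expository, e.g.\ your explicit verification that $X^\gamma\subset T\cap\cQ$ forces $C+X^\gamma=T$ via the dimension argument, which the paper leaves implicit.
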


\begin{proof}
By choosing some $M\in\CS$, we obtain from Definition~\ref{def:C-par} that
$\CS^\gamma$ is the section of the Klein quadric by the solid $T:=C+M^\gamma$.
Next, we read off from the proof of Proposition~\ref{prop:par} that
$\CS^\gamma$ is an elliptic quadric, whence its span is a solid, which clearly
coincides with $T$. So our $T$ is uniquely determined by $\CS$ and, by its
definition, contains the plane $C$.
\par
If $T\supset C$ is a solid then, upon choosing a plane $G$ on the Klein quadric
$\cQ$ and by applying Lemma~\ref{lem:schnitt}, we see that there is a line
$M\in\cL(V)$ with $M^\gamma=T\cap G$. So, by the above, $\CS(M)^\gamma$
generates the solid $T$. Since $\CS(M)^\gamma$ \emph{is}\/ the section of $\cQ$
by the solid $T$ (and not only a subset of this section), no parallel class
other than $\CS(M)$ gives rise to $T$.
\end{proof}

\begin{cor}\label{cor:recover}
The plane $C$ in\/ $\bPVV$ can be uniquely recovered from any two distinct
parallel classes of the parallelism\/ $\Cp$ of\/ $\bPV$.
\end{cor}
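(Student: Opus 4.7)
The plan is to leverage Proposition~\ref{prop:bijektion} to translate the statement about parallel classes into a statement about solids in $\bPVV$ that contain $C$, and then to recover $C$ by an intersection argument.

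First, I would pick two distinct parallel classes $\CS_1, \CS_2$ of the parallelism $\Cp$ and use Proposition~\ref{prop:bijektion} to associate with each $\CS_i$ the unique solid $T_i \supset C$ in $\bPVV$ whose section with the Klein quadric $\cQ$ equals $\CS_i^\gamma$. The uniqueness clause in Proposition~\ref{prop:bijektion} also guarantees that the correspondence $\CS_i \mapsto T_i$ is injective, so $\CS_1 \neq \CS_2$ forces $T_1 \neq T_2$. Thus from the two given parallel classes we recover two distinct solids, both containing $C$, in a purely canonical way.

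Next, I would extract $C$ from $T_1$ and $T_2$ by a short dimension count in the six{\trenn}dimensional space $V\wedge V$. Both $T_i$ have vector dimension $4$ and contain $C$, which has vector dimension $3$, so $T_1\cap T_2$ has vector dimension at least $3$; on the other hand, since $T_1\neq T_2$ and both have vector dimension $4$, the intersection has vector dimension at most $3$. Hence $T_1\cap T_2$ has vector dimension exactly $3$ and, containing $C$, must coincide with $C$. This gives the desired expression $C = T_1\cap T_2$, showing that $C$ is uniquely and canonically determined by any two distinct parallel classes.

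I do not foresee a genuine obstacle here: the entire content of the corollary is already encoded in Proposition~\ref{prop:bijektion}, and the only thing left is the elementary dimension check. The only subtlety worth highlighting is that one really does need \emph{two} distinct classes; a single class determines only one solid $T \supset C$, which has a whole pencil of planes through $C$ in it and so does not single out $C$.
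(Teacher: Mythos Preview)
Your proof is correct and follows exactly the route the paper intends: the corollary is stated without proof as an immediate consequence of Proposition~\ref{prop:bijektion}, and your argument (two distinct parallel classes yield two distinct solids $T_1,T_2\supset C$, whence $C=T_1\cap T_2$ by dimension) is precisely the one-line reasoning the reader is expected to supply.
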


\begin{prop}\label{prop:Cperp}
Let $C^\perp$ be the polar plane of $C$ with respect to the Klein quadric\/
$\cQ$. Then the following assertions hold:
\begin{enumerate}
\item\label{prop:Cperp1} The plane $C^\perp$ is external to\/ $\cQ$.

\item\label{prop:Cperp2} If\/ $\Char F\neq 2$ then the planes $C$ and
    $C^\perp$ have no point in common.

\item\label{prop:Cperp3} If\/ $\Char F=2$ then either $C\cap C^\perp$ is a
    single point or $C=C^\perp$.
\end{enumerate}
\end{prop}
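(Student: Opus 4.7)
The plan is to handle (a) via Proposition~\ref{prop:bijektion} and the elliptic criterion recalled in Section~\ref{sect:prelim}, and to handle (b), (c) by passing to the restriction of $\lire$ to $C$.

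For (a), I would argue by contradiction. Suppose $p\in C^\perp\cap\cQ$. Then $p=M^\gamma$ for some $M\in\cLV$, and $p\not\subset C$ because $C$ is external. Hence $T:=C+p$ is a solid containing $C$. By Proposition~\ref{prop:bijektion}, $T\cap\cQ$ is the $\gamma$-image of a parallel class of $\Cp$, and by Proposition~\ref{prop:par} this class is a regular spread, so $T\cap\cQ$ is an elliptic quadric. According to the criterion recalled in Section~\ref{sect:prelim}, this forces $T^\perp\cap\cQ=\emptyset$. On the other hand, $T^\perp=C^\perp\cap p^\perp$. Since $p\in\cQ$, the tangent hyperplane $p^\perp$ contains $p$; and $p\subset C^\perp$ by assumption. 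Thus $p\subset T^\perp$, contradicting $T^\perp\cap\cQ=\emptyset$.

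For (b) and (c), the key observation is that, by definition of $\lire$ as the polar form of $\omega$, one has $\li v,v\re = 2\,v^\omega$ for every $v\in V\wedge V$. For (b), where $\Char F\ne 2$, let $p=Fv\subset C\cap C^\perp$. Then $\li v,v\re=0$, so $v^\omega=0$, and hence $p\in C\cap\cQ$, which contradicts \eqref{eq:ext}. So $C\cap C^\perp=0$.

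For (c), where $\Char F=2$, the identity $\li v,v\re=2v^\omega=0$ shows that $\lire$ is alternating on $V\wedge V$, and in particular its restriction $\beta$ to the three{\trenn}dimensional $F$-vector space $C$ is alternating. Since the rank of an alternating form is always even, $\beta$ has rank $0$ or $2$, i.e.\ its radical $C\cap C^\perp$ has $F$-dimension $3$ or $1$. In the first case $C\subset C^\perp$, and comparing dimensions gives $C=C^\perp$; in the second case $C\cap C^\perp$ is a single point of $\bPVV$. The main (minor) subtlety is the char 2 step: one must notice that the characteristic makes $\lire$ alternating and then invoke the parity of ranks of alternating forms; everything else is direct.
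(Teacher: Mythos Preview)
Your proof is correct. Parts~(b) and~(c) match the paper's argument essentially verbatim: in characteristic $\neq 2$ you use $\li v,v\re = 2\,v^\omega$ to reduce $C\cap C^\perp\neq 0$ to $C\cap\cQ\neq\emptyset$, and in characteristic $2$ you restrict the now-alternating form $\lire$ to $C$ and invoke the even-rank property.

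Part~(a) is where you genuinely diverge from the paper. The paper argues directly: for any point $q\subset C^\perp$, the hyperplane $q^\perp$ contains $C$, so by Lemma~\ref{lem:schnitt} it meets every generator plane $G\subset\cQ$ in a line, never in all of $G$; since a tangent hyperplane always contains a full generator plane, $q^\perp$ is not tangent and hence $q\notin\cQ$. Your route instead builds the solid $T=C+p$ through a hypothetical $p\in C^\perp\cap\cQ$, invokes Propositions~\ref{prop:par} and~\ref{prop:bijektion} to identify $T\cap\cQ$ as an elliptic quadric, and then uses the elliptic-congruence criterion from Section~\ref{sect:prelim} to conclude that $T^\perp$ misses $\cQ$, which contradicts $p\subset T^\perp$. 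Both arguments are sound; the paper's is leaner (it needs only Lemma~\ref{lem:schnitt} and the structure of tangent hyperplanes), while yours leans on the already-established spread machinery but has the merit of making the link to regular spreads explicit.
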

\begin{proof}
Ad \eqref{prop:Cperp1}: Let $q\subset C^\perp$ be a point. Then $q^\perp\supset
C$ is a hyperplane in $\bPVV$. By Lemma~\ref{lem:schnitt}, $G\cap q^\perp$ is a
line for all planes $G$ on the Klein quadric $\cQ$. Any tangent hyperplane of
$\cQ$ contains a plane that lies entirely in $\cQ$. Thus $q^\perp$ cannot be
tangent to $\cQ$. This in turn shows that $q\notin\cQ$.

Ad \eqref{prop:Cperp2}: Due to $\Char F\neq 2$, an arbitrary point $p $ of
$\bPVV$ belongs to $\cQ$ if, and only if, $p \subset p ^\perp$. From this
observation and \eqref{eq:ext}, we obtain $p \not\subset p ^\perp\supset
C^\perp$ for all points $p \subset C$.

Ad \eqref{prop:Cperp3}: Now $\Char F=2$ forces the bilinear form $\lire$ from
\eqref{eq:Kleinpolar} to be symplectic, whence $\perp$ is a null polarity. The
restriction of $\lire$ to $C\x C$ is an alternating bilinear form with radical
$C\cap C^\perp$. So the vector dimension of the quotient vector space $C
/(C\cap C^\perp)$ has to be even. This implies that $C\cap C^\perp$ is either a
point or a plane. In the latter case we clearly have $C=C^\perp$.
\end{proof}

\par

By Propositions~\ref{prop:par} and \ref{prop:Cperp}, the plane $C^\perp$ also
gives rise to a parallelism, which will be denoted by $\Cpp$. Clearly, the role
of $C$ and $C^\perp$ is interchangeable.

\begin{prop}\label{prop:opp}
Let\/ $\cR\subset \cL(V)$ be a regulus whose lines are mutually
$C${\trenn}parallel. Then the lines of its opposite regulus $\cR'$ are mutually
$C^\perp${\trenn}parallel.
\end{prop}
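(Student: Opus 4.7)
The plan is to translate everything to the Klein quadric and exploit the polarity $\perp$ that relates $C$ and $C^\perp$. Since $\cR$ is a regulus, its image $\cR^\gamma$ is a non{\trenn}degenerate conic which spans a plane $E\subset V\wedge V$; likewise $\cR'^\gamma$ spans a plane $E'$. The first key step is the identification $E'=E^\perp$. To prove it I would use the standard fact that each line of $\cR$ meets each line of the opposite regulus $\cR'$; by the $\perp${\trenn}conjugacy criterion recalled in Section~\ref{sect:prelim}, every point of $\cR^\gamma$ is then conjugate to every point of $\cR'^\gamma$. Because $\cR^\gamma$ spans $E$ and $\cR'^\gamma$ spans $E'$, this forces $E'\subset E^\perp$; equality of projective dimensions then gives $E'=E^\perp$.

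Next I would locate $E$ with respect to the plane $C$. By hypothesis all lines of $\cR$ lie in a single parallel class of $\Cp$, so Proposition~\ref{prop:bijektion} supplies a solid $T\supset C$ with $\cR^\gamma=\cP(T)\cap\cQ$; in particular $E\subset T$. Since $C$ meets $\cQ$ trivially by \eqref{eq:ext} while $\cR^\gamma$ is a non-empty subset of $E$, one has $C\neq E$. Hence the two planes $C$ and $E$ of the solid $T$ must meet in a line, and $C+E=T$. Polarising yields $T^\perp=C^\perp\cap E^\perp=C^\perp\cap E'$, which is a line, so that $T':=C^\perp+E'$ is a solid containing $C^\perp$.

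To finish, take any $N\in\cR'$. Then $N^\gamma\subset E'\subset T'$, while $N^\gamma\not\subset C^\perp$ because $C^\perp$ is external to $\cQ$ by Proposition~\ref{prop:Cperp}~\eqref{prop:Cperp1}. Consequently $C^\perp+N^\gamma$ is a solid contained in $T'$, and therefore equal to $T'$. Thus $C^\perp+N^\gamma=T'$ holds for every $N\in\cR'$, which by Definition~\ref{def:C-par} applied to $C^\perp$ in place of $C$ means precisely that the lines of $\cR'$ are mutually $C^\perp${\trenn}parallel.

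I expect the only non-routine step to be the identification $E'=E^\perp$, which hinges on the regulus/opposite{\trenn}regulus incidence together with the $\perp${\trenn}conjugacy criterion for intersecting lines; once this is in place the rest is a clean dimension count inside $V\wedge V$, made tidy by the fact that both $C$ and $C^\perp$ are external to $\cQ$.
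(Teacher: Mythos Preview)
Your argument is correct and follows the same route as the paper: identify the plane of $\cR'^{\,\gamma}$ as $E^\perp$, place $E$ and $C$ in a common solid $T$ via Proposition~\ref{prop:bijektion}, and polarise to see that $C^\perp+E^\perp$ is a solid. One small slip to fix: Proposition~\ref{prop:bijektion} yields $\CS^\gamma=\cP(T)\cap\cQ$ for the full parallel class $\CS\supset\cR$, not $\cR^\gamma=\cP(T)\cap\cQ$, but you only use the containment $E\subset T$, so the proof stands.
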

\begin{proof}
The image $\cR^\gamma$ is the section of $\cQ$ by a (uniquely determined)
plane, say $E$. Then ${\cR'}^{\,\gamma}$ is the section of $\cQ$ by the plane
$E^\perp$. Let $\CS$ denote the $C${\trenn}parallel class that contains $\cR$.
By Proposition~\ref{prop:bijektion}, its image $\CS^\gamma$ is the section of
$\cQ$ by a uniquely determined solid, say $T$, with $T\supset C$. Now $T=E+C$
implies that $E\cap C$ is a line, whence ${E^\perp}+ {C^\perp}$ is a solid.
Thus the lines of $\cR'$ are mutually $C^\perp${\trenn}parallel.
\end{proof}

We are now in a position to show our first main result, namely that $\bP(V)$
together with our parallelisms $\Cp$ and $\Cpp$ is a \emph{double space}\/
\cite[p.\,113]{john-03a}, \cite[p.\,75]{karz+k-88a}. This amounts to verifying
the \emph{double space axiom}, which in our setting reads as follows:

\begin{itemize}
\item[(D)] For any three non{\trenn}collinear points $p$, $q$, $r$ in
    $\bP(V)$ the unique line through $r$ that is $C${\trenn}parallel to
    $p\oplus q$ has a point in common with the unique line through $q$ that
    is $C^\perp${\trenn}parallel to $p\oplus r$.
\end{itemize}

\begin{thm}\label{thm:doublespace}
The parallelisms\/ $\Cp$ and\/ $\Cpp$ turn the projective space\/ $\bPV$ into a
double space. This implies that the $F${\trenn}vector space $V$ can be made
into an $F${\trenn}algebra $H:=V$ subject to\/ \emph{\eqref{A}} or\/
{\eqref{B}} such that the canonical Clifford parallelisms\/ $\lip$ and\/ $\rep$
of\/ $\bPH$ coincide with\/ $\Cp$ and\/ $\Cpp$, respectively.
\end{thm}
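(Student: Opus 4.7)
The plan is to verify the double space axiom (D) by a direct Pl\"ucker calculation, and then invoke the classical structure theorem for three-dimensional projective double spaces to obtain the claimed algebra $H$. Once (D) is established, the second sentence of the theorem is a routine consequence of the theory summarised in \cite[\S~14]{karz+k-88a} and \cite[pp.\,112--115]{john-03a}, applied to the double space $(\bPV,\Cp,\Cpp)$.

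For (D), I would fix representatives $p_0,q_0,r_0\in V$ of three non-collinear points $p,q,r$, set $L_1:=p\oplus q$ and $L_2:=p\oplus r$, and use $L_1^\gamma=F(p_0\wedge q_0)$, $L_2^\gamma=F(p_0\wedge r_0)$. By Proposition~\ref{prop:par} there is a unique line $L_1'$ through $r$ with $L_1'\Cp L_1$ and a unique line $L_2'$ through $q$ with $L_2'\Cpp L_2$. Since $C\cap\cQ=\emptyset$ (and similarly for $C^\perp$), after rescaling I may write
\begin{equation*}
{L_1'}^\gamma=c_1+p_0\wedge q_0=r_0\wedge s_1,\qquad {L_2'}^\gamma=c_2+p_0\wedge r_0=q_0\wedge s_2
\end{equation*}
with $c_1\in C$, $c_2\in C^\perp$ and suitable $s_1,s_2\in V$; the first equality in each pair expresses the parallelism condition and the second the fact that the line passes through $r$ resp.\ $q$.

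The goal is $\li{L_1'}^\gamma,{L_2'}^\gamma\re=0$, since by the conjugation criterion recalled in Section~\ref{sect:prelim} this forces $L_1'$ and $L_2'$ to meet or coincide, in either case sharing a common point. Expanding the bilinear form gives four summands. The term $\li c_1,c_2\re$ vanishes because $c_1\in C$ and $c_2\in C^\perp$; the term $\li p_0\wedge q_0,p_0\wedge r_0\re$ vanishes because $L_1$ and $L_2$ share $p$. For the mixed term $\li c_1,p_0\wedge r_0\re$, substituting $c_1=r_0\wedge s_1-p_0\wedge q_0$ reduces it to $\li r_0\wedge s_1,p_0\wedge r_0\re$, which vanishes because both bivectors represent lines through $r$. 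A symmetric argument using $c_2=q_0\wedge s_2-p_0\wedge r_0$ handles $\li p_0\wedge q_0,c_2\re$, and (D) is proved.

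Together with Proposition~\ref{prop:par}, this shows that $(\bPV,\Cp,\Cpp)$ is a three-dimensional projective double space, so the algebraic half of the theorem follows at once from the coordinatisation results cited above. The only real obstacle I foresee is the bookkeeping step of presenting each ${L_i'}^\gamma$ simultaneously as an element of $C+L_i^\gamma$ (resp.\ $C^\perp+L_i^\gamma$) and as a decomposable bivector based at $r_0$ resp.\ $q_0$; it is precisely this double description that creates the cancellations in the mixed terms, and once it is in place the verification of (D) is essentially automatic.
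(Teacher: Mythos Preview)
Your argument is correct, but it takes a genuinely different route from the paper. The paper proves~(D) geometrically: setting $M=p\oplus q$ and $N=p\oplus r$, it observes that the lines of $\CS(M)$ meeting $N$ form a regulus $\cR$ containing the required line $M_1$ through $r$; the line $N_1$ through $q$ in the opposite regulus $\cR'$ then meets $M_1$, and Proposition~\ref{prop:opp} (opposite reguli swap $C$ and $C^\perp$) identifies $N_1$ as the line through $q$ that is $C^\perp${\trenn}parallel to $N$. So the paper packages the whole verification into the regulus proposition, which in turn is proved by a dimension count on $E\cap C$ and $E^\perp+C^\perp$.

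Your approach bypasses reguli entirely and verifies the incidence $\li {L_1'}^\gamma,{L_2'}^\gamma\re=0$ by a four-term expansion, each term vanishing for a transparent reason (orthogonality of $C$ and $C^\perp$, or two lines sharing a point). This is more elementary and self-contained: it does not need Proposition~\ref{prop:opp} at all, only Proposition~\ref{prop:Cperp}\,\eqref{prop:Cperp1} to know that $C^\perp$ is external so that the normalisation of ${L_2'}^\gamma$ goes through. The paper's route, on the other hand, makes the underlying geometry visible and yields Proposition~\ref{prop:opp} as a reusable by-product. Both approaches lead to the same appeal to \cite[\S~14]{karz+k-88a} for the algebraic conclusion.
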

\begin{proof}
With the notation from (D), let $M:=p\oplus q$ and $N:=p\oplus r$. The set of
all lines from $\CS(M)$ that meet $N$ in some point is a regulus $\cR$, say.
One line of $\cR$ is the unique line $M_1$ satisfying $r\subset M_1\Cp M$. The
point $q$ is incident with a unique line $N_1$ of the regulus $\cR'$ opposite
to $\cR$. Hence $M_1$ and $N_1$ have a point in common. From $N\in\cR'$ and
Proposition~\ref{prop:opp}, this $N_1$ is at the same time the only line
through $q$ that satisfies $N_1\Cpp N$.

By \cite[(14.2) and (14.4)]{karz+k-88a} or \cite[Thms.\,1--4]{john-03a}, where
the work of numerous authors is put together, the vector space $V$ can be
endowed with a multiplication that makes it into an $F${\trenn}algebra with the
required properties.
\end{proof}

\section{From Clifford towards Klein}\label{sect:Cliff-Klein}

Let us turn to the problem of reversing Theorem~\ref{thm:doublespace}. We
assume that an $F${\trenn}algebra $H$ is given according to condition \eqref{A}
or \eqref{B} from Section~\ref{sect:parallel}. We aim at describing the
canonical Clifford parallelisms of $\bPH$ in terms of the ambient space $\bPHH$
of the Klein quadric $\cQ$. (Notations that were introduced for $V\wedge V$
will be used \emph{mutatis mutandis}\/ also for $H\wedge H$.) To this end let
$H/F$ be the \emph{quotient vector space}\/\footnote{The symbol $H/F$ will
exclusively be used to denote this quotient space rather than to express that
$H$ is a skew field extension of $F$.} of the $F${\trenn}vector space $H$
modulo its subspace $F$. The mapping
\begin{equation}\label{eq:beta}
      \beta\colon    H\x H \to H/F \colon  (g,h)\mapsto \ol g h + F
\end{equation}
is $F${\trenn}bilinear and alternating, since for all $h\in H$ the norm $\ol h
h$ is in $0+F\in\ H/F$. By the universal property of the exterior square
$H\wedge H$, there is a unique $F${\trenn}linear mapping
\begin{equation}\label{eq:kappa}
    \kappa \colon  H\wedge H \to H/F \mbox{~~such that~~} (g\wedge h)^\kappa = (g,h)^\beta
    \mbox{~~for all~~}g,h\in H,
\end{equation}
and we define
\begin{equation}\label{eq:C=ker}
    C:=\ker \kappa .
\end{equation}
Our $\kappa$ is surjective, due to $h+F=(1\wedge h)^\kappa$ for all $h \in H$.
So the kernel of $\kappa$ has vector dimension $6-3$, \emph{i.\,e.},
$C=\ker\kappa$ is a plane in $\bPHH$. In analogy to \eqref{eq:beta} and
\eqref{eq:kappa}, the alternating $F${\trenn}bilinear mapping
\begin{equation}\label{eq:beta'}
    \beta'\colon   H\x H \to H/F \colon  (g,h)\mapsto g\ol h + F
\end{equation}
gives rise to a uniquely determined surjective $F${\trenn}linear mapping
$\kappa'\colon H\wedge H\to H/F$ such that $(g\wedge h)^{\kappa'} = (g ,
h)^{\beta'}$ for all $g,h\in H$. Therefore, a second plane in $\bPHH$ is given
by
\begin{equation}\label{eq:C'=ker}
    C': = \ker\kappa' .
\end{equation}

\begin{thm}\label{thm:C+C'}
In\/ $\bPHH$ the plane $C=\ker\kappa$ is external to the Klein quadric $\cQ$.
The canonical Clifford parallelism\/ $\lip$ of\/ $\bPH$ coincides with the
parallelism\/ $\Cp$ that arises from the plane $C$ according to
Definition\/~\emph{\ref{def:C-par}}. A similar result holds for the plane
$C'=\ker\kappa'$, the canonical Clifford parallelism\/ $\rep$, and the
parallelism\/ $\Ccp$. Furthermore, $C$ and $C'$ are mutually polar under the
polarity of the Klein quadric.
\end{thm}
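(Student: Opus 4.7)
The plan is to dispatch the four assertions in order, exploiting the multiplicative structure of $H$ for the first three and a regulus-duality argument for the fourth.

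For Part 1 (that $C$ is external to $\cQ$), I take a putative point $F(g\wedge h)$ of $C\cap\cQ$ with $g,h\in H$ linearly independent; then $\kappa(g\wedge h)=\ol{g}h+F=0$ forces $\ol{g}h\in F$. In both cases \eqref{A} and \eqref{B} the nonzero element $g$ is a unit in $H$ with central norm $\ol{g}g\in F^{*}$, so left multiplication of $\ol{g}h\in F$ by $g/(\ol{g}g)$ delivers $h\in Fg$, contradicting the linear independence of $g,h$.

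For Parts 2 and 3 the central identity is
\[
\kappa(cu\wedge cv)=\overline{cu}\,(cv)+F=\ol{u}(\ol{c}c)v+F=(\ol{c}c)\,\kappa(u\wedge v),\qquad c\in H^{*},
\]
using that $\ol{c}c\in F$ is central and $\kappa$ is $F$-linear. If $M=Fu\oplus Fv$ and $N=cM$, this shows $cu\wedge cv-(\ol{c}c)(u\wedge v)\in\ker\kappa=C$, so $N^{\gamma}\subseteq C+M^{\gamma}$; by Part 1, both $C+M^{\gamma}$ and $C+N^{\gamma}$ are solids, so they coincide and $M\Cp N$. Thus $\lip\,\subseteq\,\Cp$ as relations, and the standard parallelism-uniqueness argument upgrades this to equality $\lip\,=\,\Cp$. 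Part 3 is settled by the mirror computation $\kappa'(uc\wedge vc)=(c\ol{c})\,\kappa'(u\wedge v)$.

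Part 4, the identity $C^{\perp}=C'$, is the real obstacle. My plan is to prove $\Cpp\,=\,\rep$ geometrically and then invoke Corollary~\ref{cor:recover}: two distinct parallel classes shared by $\Cpp$ and $\Ccp=\rep$ recover both $C^{\perp}$ and $C'$, forcing them to coincide. To establish $\Cpp\,=\,\rep$, take distinct (and hence skew) $\rep$-parallel lines $M,N$ and choose a third line $L\in\reS(M)$ distinct from $M,N$. Since $\reS(M)^{\gamma}$ is an elliptic quadric carrying no line, the plane spanned by $M^{\gamma},N^{\gamma},L^{\gamma}$ meets it in a non-degenerate conic whose $\gamma$-preimage is a regulus $\cR\subseteq\reS(M)$, so $\cR$ is mutually $\rep$-parallel. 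The classical Clifford theorem (see \cite[\S\,14]{karz+k-88a}) that the regulus opposite to a $\rep$-parallel regulus is $\lip$-parallel then makes $\cR'$ mutually $\lip\,=\,\Cp$-parallel, and applying Proposition~\ref{prop:opp} to $\cR'$ returns that $\cR=(\cR')'$ is $\Cpp$-parallel. In particular $M\Cpp N$, so $\rep\,\subseteq\,\Cpp$, and equality follows by parallelism-uniqueness. The chief difficulty is exactly this bridge from the algebraic relation $\rep$ to the geometric $\Cpp$; the classical opposite-regulus theorem from Clifford theory is the only non-routine ingredient.
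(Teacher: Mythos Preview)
Your treatment of Parts~1--3 is essentially the paper's own: the same computation $(cg\wedge ch)^\kappa=(\ol c c)(g\wedge h)^\kappa$ yields the invariance of $C+M^\gamma$ under left translation, and the containment ${\lip}\subseteq{\Cp}$ is upgraded to equality because both are parallelisms (spreads cannot be properly nested).

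Part~4 is where you diverge. The paper does \emph{not} go through reguli at this point. Instead it observes that $(\bPH,{\lip},{\rep})$ is a projective double space by construction, and that $(\bPH,{\Cp},{\Cpp})$ is a projective double space by Theorem~\ref{thm:doublespace}; since the two double-space structures share the first parallelism ${\lip}={\Cp}$, the well-known uniqueness of the second parallelism in a double space \cite[p.\,76]{karz+k-88a} forces ${\rep}={\Cpp}$. Combined with ${\rep}={\Ccp}$ from Part~3 and Corollary~\ref{cor:recover}, this gives $C'=C^\perp$. Your route---take a regulus $\cR$ inside a right parallel class, invoke the classical fact that $\cR'$ is left parallel, then apply Proposition~\ref{prop:opp} to $\cR'$---is correct and is in a sense the ``unfolded'' version of the same idea: Proposition~\ref{prop:opp} is exactly the ingredient the paper uses to prove Theorem~\ref{thm:doublespace}, and the opposite-regulus statement you cite for $({\lip},{\rep})$ is the double-space axiom in disguise. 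What the paper's route buys is economy: it reuses Theorem~\ref{thm:doublespace} wholesale and appeals to a single structural uniqueness result, rather than re-importing the opposite-regulus theorem for $({\lip},{\rep})$ from the literature. Your route has the merit of being more explicit, but you should be sure the reference you give for the opposite-regulus property of Clifford parallelisms in arbitrary characteristic really covers both cases \eqref{A} and \eqref{B}; otherwise the step ``$\cR'$ is mutually $\lip$-parallel'' is the one place your argument leans on something not internally established.
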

\begin{proof}
We choose any point of $\cQ$; it can be written in the form $F(g\wedge h)$ for
some elements $g,h\in H$ that are linearly independent over $F$. We have
\begin{equation*}
    g\wedge x \in C \Leftrightarrow \ol g x \in F
   \Leftrightarrow g^{-1} x \in F
    \Leftrightarrow x \in gF = F g
    \mbox{~~for all~~}x\in H.
\end{equation*}
So the arbitrarily chosen point $F(g\wedge h)$ is not incident with $C$.
\par
The multiplicative group $H\setminus\{0\}$ acts on $H$ via left multiplication.
More precisely, for any $c\in H\setminus\{0\}$ we obtain the \emph{left
translation} $\lambda_c\colon H\to H\colon h\mapsto c h$. As $\lambda_c$ is an
$F${\trenn}linear bijection, so is its exterior square ${\lambda_c \ulwedge
\lambda_c}\colon H\wedge H\to H\wedge H$. This exterior square describes the
action of $\lambda_c$ on the line set $\cLH$ in terms of bivectors, as it takes
any pure bivector $g\wedge h\in H\wedge H$ to $c g\wedge c h$. So we may read
off from $\ol c c = c\ol c\in F\setminus\{0\}$ and
\begin{equation*}\label{}
    (c g\wedge c h)^\kappa = \ol{c g}c h + F = \ol g (\ol c c) h + F  = (c\ol c) (g\wedge h)^\kappa
\end{equation*}
that
\begin{equation}\label{eq:EV}
    C + F(c g \wedge c h ) = C + F(g \wedge h) \mbox{~~for all~~}g,h\in H.
\end{equation}
As the pure bivectors span $H\wedge H$, formula \eqref{eq:EV} implies that all
subspaces of $H\wedge H$ passing through $C$ are invariant under
$\lambda_c\ulwedge\lambda_c$.
\par
Now let $M\lip N$, whence there is a particular $c\in H\setminus\{0\}$ with
$cM=N$. By the above, the subspace $C+M^\gamma$ is invariant under
$\lambda_c\ulwedge\lambda_c$, so that $C+M^\gamma = C+ (cM)^\gamma$ or, said
differently, $M\Cp N$. We obtain as an intermediate result that every left
parallel class is a subset of a $C${\trenn}parallel class. However, a left
parallel class cannot be properly contained in a $C${\trenn}parallel class, for
then it would not cover the entire point set of $\bP(H)$.
\par
By switching from left to right and replacing $\kappa$ with $\kappa'$, the
above reasoning shows that the right parallelism $\rep$ coincides with $\Ccp$.
\par
Finally, we establish that the polarity $\perp$ of the Klein quadric takes the
plane $C$ to the plane $C'$. Here we use the well known result that each of the
two (not necessarily distinct) parallelisms of a projective double space
determines uniquely the other parallelism \cite[p.\,76]{karz+k-88a}. So, by the
above and Theorem~\ref{thm:doublespace}, we obtain from ${\lip}={\Cp}$ that
${\Ccp}={\rep}={\Cpp}$. Now Corollary~\ref{cor:recover} shows $C'=C^\perp$.
\end{proof}
\par
By virtue of Theorem~\ref{thm:doublespace} and the preceding theorem, we have
established the announced one-one correspondence between Clifford parallelisms
and planes that are external to the Klein quadric.

\begin{rem}
As our approach to the planes $C$ and $C'=C^\perp$ in \eqref{eq:C=ker} and
\eqref{eq:C'=ker} is somewhat implicit, it seems worthwhile to write down a
basis for each of these planes. This can be done as follows: First we apply
$\kappa$ and $\kappa'$ to the six basis elements $1\wedge i, 1\wedge j,\ldots,
j\wedge k$ of $H\wedge H$, which then allows us to find three linearly
independent bivectors in $\ker\kappa$ and $\ker\kappa'$, respectively. See
Table~\ref{tab:basen} and take notice that in case~\eqref{A2} the point $C\cap
C^\perp$ is given by the bivector $b\wedge i + j\wedge k$.
\begin{table}[!h]\renewcommand{\arraystretch}{1.2}
\begin{equation*}
    \begin{array}{|c||c||c|r|r|}
    \hline
    \mbox{Cases}&\mbox{Plane}&\multicolumn{3}{c|}{ \mbox{Basis}}\\
    \hline
    \!\eqref{A1},\,  \eqref{B}\!& C\phantom{^\perp} &
     b\wedge i - j\wedge k  &  a \wedge j + i\wedge k& 1\wedge k + i \wedge j   \\
    \cline{2-5}
    &C^\perp&
    b\wedge i + j\wedge k  &  a\wedge j- i\wedge k  &  1\wedge k-i\wedge j  \\
    \hline
    \eqref{A2}& C\phantom{^\perp} &
    b\wedge i + j\wedge k  & a\wedge j + i\wedge k  &  1\wedge(k+j) +i\wedge j \\
    \cline{2-5}
    & C^\perp &
    b\wedge i+j\wedge k &  1\wedge (aj+k) + i\wedge k  & 1\wedge k+ i\wedge j  \\
    \hline
    \end{array}
\end{equation*}
\caption{Bases of the planes $C$ and $C^\perp$.}\label{tab:basen}
\end{table}
\par
Alternatively, we may consider hyperplanes of $\bPHH$ that are incident with
$C$. Any such hyperplane can be obtained as the kernel of an $F${\trenn}linear
form $H\wedge H\to F$ as follows: We choose any non{\trenn}zero
$F${\trenn}linear form $\phi\colon H\to F$ such that $1^\phi=0$. Then the
mapping
\begin{equation}\label{eq:altblf}
    H\x H\to F \colon  (g,h)\mapsto (\ol g h)^\phi
\end{equation}
is an alternating $F${\trenn}bilinear form. The universal property of $H\wedge
H$ gives the existence of a unique $F${\trenn}linear form $ \psi \colon H\wedge
H\to F$ such that $(g\wedge h)^\psi = (\ol g h)^\phi$ for all $g,h\in H$, and
clearly $C\subset \ker\psi$. In this way $C$ can be described as intersection
of three appropriate hyperplanes. By replacing $\ol g h$ with $g \ol h$, a
similar result is obtained for $C^\perp$. In case~\eqref{A2}, \emph{i.\,e.}, if
$H$ is a quaternion skew field and $\Char F=2$, the trace form is a
distinguished choice of $\phi$. It turns \eqref{eq:altblf} into the polar form
of the norm: $(g,h)\mapsto \ol g h + \ol h g $. Furthermore, the hyperplane
arising from the trace form is equal to $C+C^\perp$, due to $\ol g h + \ol h g
= g \ol h + h \ol g$ for all $g,h\in H$.
\end{rem}

\begin{rem}
The mappings \eqref{eq:beta} and \eqref{eq:beta'} admit a geometric
interpretation by considering the projective plane $\bP(H/F)$. The ``points''
of this projective plane can be identified with the lines of the star
$\cL(F1,H)$ via $F(h+F)\mapsto F1\oplus Fh$ for all $h\in H\setminus F$. We
define a mapping $\cLH\to\cL(F1,H)$ by assigning to each line $M$ the only line
of the parallel class $\liS(M)$ through the point $F1$. Letting $M=Fg\oplus Fh$
with (linearly independent) $g,h\in H$, we obtain $F1\oplus F(g^{-1}h) = F(\ol
g h) \oplus F1$ as image of $M$. So the vector $\ol gh + F\in H/F$ appearing in
\eqref{eq:beta} is a representative of the image of $M$. The interpretation of
\eqref{eq:beta'} in terms of $\rep$ is similar. In the classical setting such a
mapping is known under its German name \emph{Eckhart-Rehbock Abbildung}; see
\cite{brau-73a}, \cite{wund-60a}, and the references given there.
Generalisations, in particular to Lie groups, are the topic of
\cite[pp.\,16--17]{brau-73a}, \cite{husty+n-93}, and \cite{lueb-79a}.
\par
Alternating mappings like the ones from \eqref{eq:beta} and \eqref{eq:beta'}
appear in the definition of \emph{generalised Heisenberg algebras}; see, for
example, \cite[Def.\,6.1]{stro-08a}. These algebras are important in the
classification of certain \emph{nilpotent Lie algebras}. We encourage the
reader to take a closer look at \cite{cica+d+s-12a},
\cite[Sect.\,7]{gulde+st-12a}, and \cite[Sect.\,8]{stro-08a}, in order to see
how planes external to the Klein quadric have successfully been utilised in
that context.
\end{rem}

\begin{rem}
Our proof of Theorem~\ref{thm:C+C'} has shown the following: All hyperplanes of
$\bPHH$ that are incident with $C$ are invariant under the exterior square of
any left translation $\lambda_c$. Since ${\lambda_c \ulwedge \lambda_c}$
commutes with the polarity of the Klein quadric, we immediately obtain that
${\lambda_c \ulwedge \lambda_c}$ fixes all points of the plane $C^\perp$. A
similar result holds for the exterior square of any \emph{right translation}
$\rho_c\colon H\to H \colon h\mapsto h c$. However, we do not enter into a
detailed discussion of these mappings. In this regard, the articles
\cite{graeu-62a} and \cite{zadd-89a} about real quaternions deserve special
mention.
\end{rem}

\section{Characterisations of a single Clifford parallelism}\label{sect:character}

In this section we consider again a three{\trenn}dimensional projective space
$\bPV$ as described in Section~\ref{sect:prelim}.

\begin{lem}\label{lem:special}
No spread of\/ $\bPV$ is contained in a special linear complex of lines.
\end{lem}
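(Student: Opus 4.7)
My approach is to argue by contradiction. Suppose $\cS$ is a spread of $\bPV$ entirely contained in the special linear complex with some axis $A\in\cLV$; then each $M\in\cS$ either equals $A$ or meets $A$ in a single point of $\bPV$.

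First, I would exclude the case $A\in\cS$: any further $M\in\cS$ with $M\neq A$ satisfies $M\cap A\neq 0$, and this point would be covered by the two distinct spread lines $A$ and $M$, violating the spread property. Since any spread of $\bPV$ consists of more than one line, this forces $A\notin\cS$. Consequently each $M\in\cS$ meets $A$ in exactly one point, yielding a map $f\colon\cS\to\cP(A),\; M\mapsto M\cap A$; this $f$ is injective, because two distinct spread lines meeting $A$ at the same point would doubly cover that point.

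The crux is then a purely planar argument. I would pick any $M\in\cS$ (necessarily with $M\neq A$) and form the plane $\pi:=A+M$ of $\bPV$. Since no projective plane over $F$ is exhausted by only two of its lines (an easy count even when $F=\mathrm{GF}(2)$), there exists a point $q\in\cP(\pi)$ lying neither on $A$ nor on $M$. The unique spread line $N\in\cS$ through $q$ meets $A$ at $f(N)\subset\pi$, so $N$ passes through the two distinct points $q$ and $f(N)$ of $\pi$ and therefore $N\subset\pi$. Now $M$ and $N$ are distinct lines of the projective plane $\bP(\pi)$, hence they share a unique point $r$. If $r\subset A$, then $r=f(M)=f(N)$, contradicting the injectivity of $f$; if $r\not\subset A$, then $r$ lies on two distinct spread lines $M$ and $N$, again contradicting the spread property. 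Either case yields the contradiction, and the only mildly delicate point is the harmless projective-plane covering fact above. No appeal to the Klein quadric or to parallelism theory is required, and the argument is independent of $\Char F$.
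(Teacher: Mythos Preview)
Your proof is correct and follows essentially the same line as the paper's: both argue by contradiction, form the plane $A+M$ spanned by the axis and a spread line, pick a point of this plane off both $A$ and $M$, and observe that the second spread line through that point must also lie in the plane, forcing two distinct spread lines to intersect. Your version is somewhat more elaborate (the explicit exclusion of $A\in\cS$, the map $f$, and the final case split on whether $r\subset A$) but none of this is needed---in particular, the case split is redundant since $r\in M\cap N$ with $M\neq N$ already contradicts the spread property regardless of whether $r\subset A$.
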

\begin{proof}
Assume to the contrary that a spread $\cS$ is contained in a special linear
complex with axis $A\in\cLV$, say. Choose any point $p_1$ off the line $A$ and
let $M_1\in\cS$ be the line through $p_1$. By our assumption, $A+M_1$ is a
plane, and in this plane there is a point $p_2$ that lies neither on $A$ nor on
$M_1$. The plane $A+M_1$ contains also the line $M_2\in\cS\setminus\{M_1\}$
through $p_2$, whence $M_1$ and $M_2$ have a unique common point, an absurdity.
\end{proof}
We add in passing that Lemma~\ref{lem:special} is closely related with a result
\cite[Prop.\,6.10\,(4)]{debr-11a} about a specific class of geometric
hyperplanes arising from regular spreads of lines (for arbitrary odd projective
dimension). We now show our first characterisation of a Clifford parallelism:

\begin{thm}\label{thm:hyper}
For any parallelism\/ $\parallel$ of\/ $\bPV$ the following properties are
equivalent:
\begin{enumerate}
\item\label{thm:hyper1} The parallelism is Clifford.

\item\label{thm:hyper2} Any two distinct parallel classes are contained in
    at least one geometric hyperplane of the Grassmann space\/ $\bGV$.

\item\label{thm:hyper3} Any two distinct parallel classes are contained in
    a unique linear complex of\/ $\bPV$. This linear complex is general.
\end{enumerate}
\end{thm}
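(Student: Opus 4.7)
The plan is to prove the chain $(\ref{thm:hyper1})\Rightarrow(\ref{thm:hyper3})\Rightarrow(\ref{thm:hyper2})\Rightarrow(\ref{thm:hyper1})$. The middle implication is immediate, because by the discussion at the end of Section~\ref{sect:prelim} the geometric hyperplanes of $\bGV$ are precisely the linear complexes of lines of $\bPV$, so (c) is literally a strengthening of (b).

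For $(\ref{thm:hyper1})\Rightarrow(\ref{thm:hyper3})$, I would apply Theorem~\ref{thm:C+C'} to the given Clifford parallelism to produce the associated external plane $C\subset\bPVV$. Proposition~\ref{prop:bijektion} then identifies every parallel class $\cS$ with a unique solid $T_\cS\supset C$ via $\cS^\gamma=\cP(T_\cS)\cap\cQ$. For distinct classes $\cS_1,\cS_2$ the solids $T_1\neq T_2$ both contain the plane $C$, so $T_1\cap T_2=C$ and $W:=T_1+T_2$ is a hyperplane of $\bPVV$. The linear complex $\gamma^{-1}(\cP(W)\cap\cQ)$ contains both classes, is general by Lemma~\ref{lem:special}, and is uniquely determined because any hyperplane containing $\cS_1^\gamma\cup\cS_2^\gamma$ must also contain the spans $T_1,T_2$, hence $W$.

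The substantive direction is $(\ref{thm:hyper2})\Rightarrow(\ref{thm:hyper1})$, where the strategy is to extract from (b) an external plane $C$ and then invoke Theorem~\ref{thm:C+C'}. First I would show that for every parallel class $\cS$ of $\parallel$ the Pl\"ucker span $T_\cS:=\langle\cS^\gamma\rangle$ is a solid. Hypothesis (b) places $T_\cS$ inside some hyperplane, so $\dim T_\cS\leq 4$; if $T_\cS$ were itself a hyperplane, then for every other class $\cS'$ the hyperplane from (b) containing $\cS\cup\cS'$ would have to equal $T_\cS$, forcing $\cLV$ into a single linear complex, an absurdity. The lower bound $\dim T_\cS\geq 3$ is obtained by ruling out plane, line, and point spans: a generating plane of $\cQ$ is a star or a field of lines (neither is a spread), a proper plane section of $\cQ$ is a conic or a pair of lines and cannot cover the points of $\bPV$, and the smaller cases are immediate. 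Next I would prove that distinct classes $\cS_1,\cS_2$ have $T_1\neq T_2$: otherwise the common solid $T$ would meet the first-system generator $G_p$ at each point $p\in\bPV$ in at least a line (spanned by the two distinct Pl\"ucker points $M_1(p)^\gamma,M_2(p)^\gamma$), producing lines $\ell_p\subset T\cap\cQ$; since each line of $\cQ$ lies in a unique $G_p$ (the vertex of the corresponding pencil), the map $p\mapsto\ell_p$ is injective and yields far more lines in the solid quadric $T\cap\cQ$ than it can hold. A dimension count inside the hyperplane from (b) then makes $T_1\cap T_2$ a plane $C_{12}$.

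It remains to show that $C_{12}$ is independent of the chosen pair and that the resulting plane $C$ is external to $\cQ$. For independence I would introduce a third class $\cS_3$ and argue that the planes $C_{12},C_{13},C_{23}$ all coincide with the triple intersection $T_1\cap T_2\cap T_3$, which the pairwise counts combined with (b) force to be exactly a plane. For externality, a point $M^\gamma\in C\cap\cQ$ would force $C$ into the tangent hyperplane at $M^\gamma$, so the parallel class containing $M$ would fall into the associated special linear complex, contradicting Lemma~\ref{lem:special}. Once $C$ is in place, Definition~\ref{def:C-par} gives $\Cp=\parallel$, and Theorem~\ref{thm:doublespace} together with Theorem~\ref{thm:C+C'} identifies $\parallel$ with a canonical Clifford parallelism. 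The main obstacle is the step ruling out $T_1=T_2$ for distinct classes; the planar triple-intersection and the externality of $C$ then rest on a careful analysis of the interplay between spreads in $\bPV$ and the two systems of generating planes of $\cQ$ under the sole hypothesis (b).
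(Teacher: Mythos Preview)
Your chain (a)$\Rightarrow$(c)$\Rightarrow$(b)$\Rightarrow$(a) differs from the paper's (a)$\Rightarrow$(b)$\Rightarrow$(c)$\Rightarrow$(a), and this shifts the hard work to (b)$\Rightarrow$(a), where your outline has genuine gaps.

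First, the argument for $T_1\neq T_2$ by ``counting'' lines $\ell_p$ on $T\cap\cQ$ does not work: the ground field is necessarily infinite here, and over an infinite $F$ the cardinality of $\cP(V)$ equals that of the line set of a hyperbolic quadric in a solid. The easy fix (which you need anyway) is to show directly that each $T_\cS\cap\cQ$ is elliptic: if $T_\cS^\perp$ met $\cQ$ at a point $q$, then $T_\cS\subset q^\perp$ would place the spread $\cS$ in a special linear complex, contradicting Lemma~\ref{lem:special}. Hence $T_\cS\cap\cQ=\cS^\gamma$, and $T_1=T_2$ gives $\cS_1=\cS_2$ at once. This also repairs your externality argument, whose claim that $C$ lies in the tangent hyperplane at $M^\gamma$ is unjustified; the correct reason is simply that $M^\gamma\in C\subset T_{\cS}$ forces $M\in\cS$ for \emph{every} class $\cS$.

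Second, and more seriously, the triple intersection $T_1\cap T_2\cap T_3$ need \emph{not} be a plane: three solids that pairwise share a plane may have only a common line, the three planes $C_{12},C_{13},C_{23}$ forming a ``triangle'' inside a common hyperplane. So introducing a single third class does not establish independence of $C_{12}$. What is actually required is the dichotomy (dual to the classical lemma on pairwise meeting lines) that a family of solids pairwise sharing a plane either all contain a common plane or all lie in a common hyperplane; the latter alternative is then excluded because it would put all of $\cL(V)$ into one linear complex. This is exactly the result the paper invokes from \cite[Prop.~3.2]{pank-10a} in its (c)$\Rightarrow$(a) step. The paper also obtains each solid directly as $W_1\cap W_2$ for two hyperplanes supplied by the \emph{uniqueness} in (c), bypassing your case analysis for $\dim T_\cS$; it gets from (b) to (c) via a short pencil argument (at each point $p$ the two spread lines determine the null pencil), which you may find the cleaner route.
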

\begin{proof}
\eqref{thm:hyper1}~$\Rightarrow$~\eqref{thm:hyper2}: By
Definition~\ref{def:Cliff-allg}, we can apply Theorem~\ref{thm:C+C'}. This
shows that ${\parallel}$ is one of the parallelisms ${\Cp}$ and $\Ccp$ from
there. So, up to a change of notation, we may assume ${\parallel}={\Cp}$.
Suppose that $\cS_1\neq\cS_2$ are parallel classes of ${\parallel}$. The image
$\cS_1^\gamma$ is the section of the Klein quadric by a solid $T_1\supset C$
according to Definition~\ref{def:C-par}. Likewise, all points of $\cS_2^\gamma$
are contained in a solid $T_2\supset C$. Since $T_1$ and $T_2$ have the plane
$C$ in common, there exists a hyperplane $W$ of $\bPVV$ such that $W\supset
T_1+T_2$. This $W$ is incident with all points of
$\cS_1^\gamma\cup\cS_2^\gamma$, and so $ \{X\in\cLV \mid X^\gamma \subset W\}$
is a linear complex of lines (or, said differently, a geometric hyperplane)
containing $\cS_1\cup \cS_2$.

\par
\eqref{thm:hyper2}~$\Rightarrow$~\eqref{thm:hyper3}: Suppose that parallel
classes $\cS_1\neq\cS_2$ are both contained in at least one geometric
hyperplane. Let $\cH$ be any of these. We have noticed in
Section~\ref{sect:prelim} that $\cH$ is a linear complex of lines which, by
Lemma~\ref{lem:special} applied to $\cS_1$, has to be a general.
\par
Now let $\cH'$ and $\cH''$ be general linear complexes of lines both containing
$\cS_1\cup\cS_2$. Choose any point $p$ of $\bPV$. Through $p$ there are
uniquely determined lines $M_1\in\cS_1$ and $M_2\in\cS_2$. Also we have
$M_1\neq M_2$. Each of the intersections $\cH'\cap\cL(p,V)$ and
$\cH''\cap\cL(p,V)$ is a pencil of lines. Both pencils have to contain the
lines $M_1$ and $M_2$, and therefore these pencils are identical. As $p$ varies
in the point set of $\bPV$, this gives $\cH'=\cH''$.

\par
\eqref{thm:hyper3}~$\Rightarrow$~\eqref{thm:hyper1}: Choose any parallel class
$\cS(M)$ with $M\in \cLV$. Our first aim is to show that the points of
$\cS(M)^\gamma$ generate a solid in $\bPVV$. Let $q$ be a point of $M$. Through
$q$ there are lines $M_1$ and $M_2$ such that $M$, $M_1$, and $M_2$ are not
coplanar. Denote by $\cH(M,M_1)$ and $\cH(M,M_2)$ the uniquely determined
general linear complexes that contain $\cS(M)\cup\cS(M_1)$ and
$\cS(M)\cup\cS(M_2)$, respectively. Also let $W_1$ and $W_2$ be the hyperplanes
of $\bPVV$ corresponding to these linear complexes. Then
$\cH(M,M_1)\cap\cL(q,V)$ is a pencil of lines, which contains $M$ and $M_1$,
but not $M_2$. Thus $\cH(M,M_1)\neq \cH(M,M_2)$. This gives $W_1\neq W_2$, and
therefore $T:=W_1\cap W_2$ turns out to be a solid of $\bPVV$. The
$\gamma$-preimage of the point set $\cP(T)$ is a linear congruence of lines of
$\bPV$, say $\cE$, which contains the spread $\cS(M)$ as a subset. No
hyperplane of $\bPVV$ through $T$ can be tangent to the Klein quadric by
Lemma~\ref{lem:special}, whence the line $T^\perp$ is exterior to the Klein
quadric. This means that the linear congruence $\cE$ is elliptic and hence a
regular spread. As the spread $\cS(M)$ is contained in the spread $\cE$, these
two spreads are identical. Therefore $\cS(M)^\gamma$, due to its being an
elliptic quadric, generates the solid $T$.
\par
According to the previous paragraph, we may assign to each parallel class $\cS$
a uniquely determined solid $T$ of $\bPVV$. Let $\cF$ be the set of all such
solids. The assignment $\cS\mapsto T$ is injective, since $\cS^\gamma$ is the
section of the Klein quadric by $T$. Hence $\cF$ comprises more than one solid.
From our assumption in \eqref{thm:hyper3}, any two distinct solids from $\cF$
are contained in a unique hyperplane of $\bPVV$. This implies (see, for
example, \cite[Prop.\,3.2]{pank-10a}) that at least one of the following
assertions holds:
\begin{itemize}
\item[(i)] There is a unique hyperplane of $\bPVV$ that is incident with
    all solids belonging to $\cF$.
\item[(ii)] There is a unique plane of $\bPVV$ that is incident with all
    solids belonging to $\cF$.
\end{itemize}
The situation from (i) cannot occur in our setting, since it would imply that
all parallel classes, and hence all of $\cLV$, would belong to a single linear
complex of lines. Consequently, only (ii) applies, and we let $C$ be this
uniquely determined plane. We establish that this $C$ is external to the Klein
quadric: Indeed, any common point would be the $\gamma$-image of a line
belonging to \emph{all}\/ parallel classes, which is utterly absurd.
Consequently, ${\parallel}$ coincides with ${\Cp}$, and
Theorem~\ref{thm:doublespace} shows that $\parallel$ is Clifford.
\end{proof}

For a version of the previous result, limited to the real case and with
somewhat different assumptions, we refer to \cite[Lemma\,14]{bett+r-05a} and
\cite[Def.\,1.9]{bett+r-12a}. Our second characterisation is based on the
following \emph{condition of crossed pencils}\/ (Figure~\ref{abb:crossed}) for
an arbitrary parallelism $\parallel$ of $\bPV$:
\begin{itemize}
\renewcommand{\labelsep}{0.3ex}
\item[(CP)] For all lines $M_1\parallel N_1$ and $M_2\parallel N_2$ such
    that $p:=M_1\cap M_2$ and $q:=N_1\cap N_2$ are two distinct points the
    following holds: If the lines $M_1$, $M_2$, and $p\oplus q$ are in a
    common pencil, so are the lines $N_1$, $N_2$, and $p\oplus q$.
\end{itemize}
\begin{figure}[!ht]\unitlength1cm
  \centering
  \begin{picture}(4.5,3)
  \small
    \put(0,0.0){\includegraphics[height=8\unitlength]{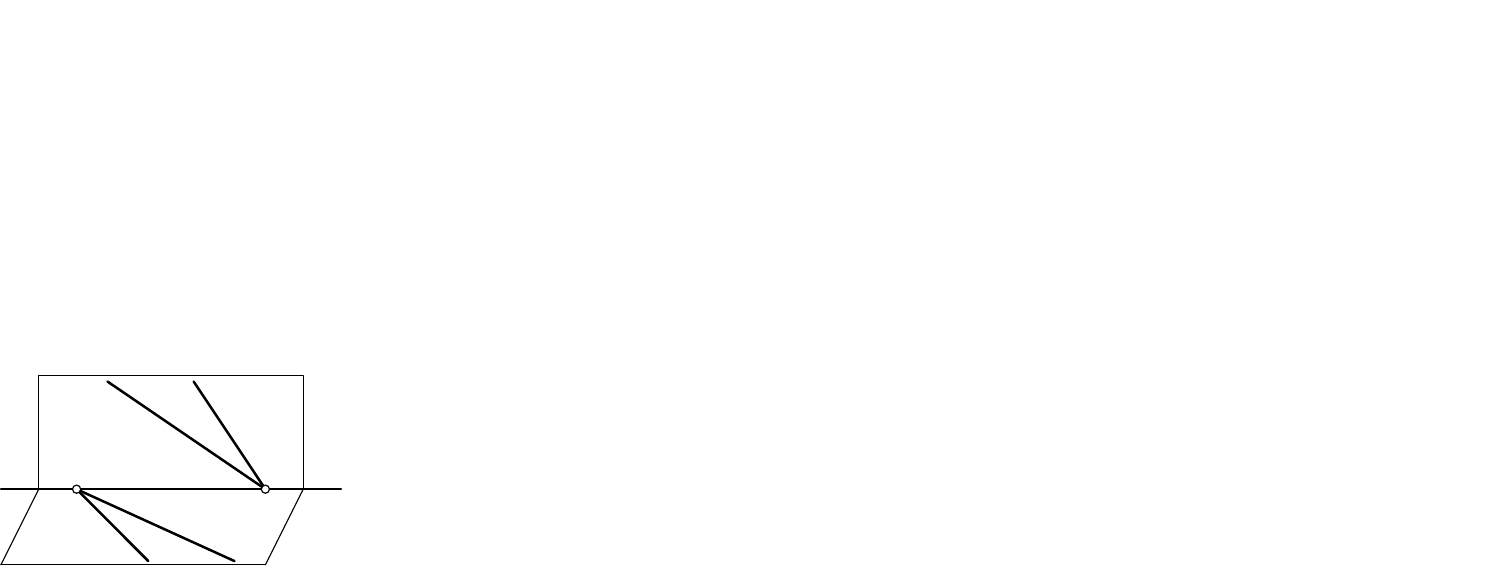}}
    \put(0.9,0.8){$p$}
    \put(3.7,1.3){$q$}
    \put(1.3,0.15){$M_1$}
    \put(3.25,0.15){$M_2$}
    \put(1.2,2.3){$N_1$}
    \put(3.0,2.3){$N_2$}
  \end{picture}
   \caption{Condition of crossed pencils}\label{abb:crossed}
\end{figure}

\begin{thm}\label{thm:crossed}
A parallelism\/ $\parallel$ of\/ $\bPV$ is Clifford if, and only if, it
satisfies the condition of crossed pencils.
\end{thm}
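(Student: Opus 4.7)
The plan is to prove both implications by routing through Theorem~\ref{thm:hyper}, which characterizes Clifford parallelisms via the property that any two distinct parallel classes lie in a unique general linear complex.

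For the forward direction (Clifford $\Rightarrow$ (CP)), assume $\parallel$ is Clifford and take a configuration as in (CP). Since $p=M_1\cap M_2$ is a point, $M_1\neq M_2$; since a parallel class is a spread, two distinct lines in the same class are disjoint, whence $\cS(M_1)\neq \cS(M_2)$ (and likewise $N_1\neq N_2$). By Theorem~\ref{thm:hyper}, there is a \emph{general} linear complex $\cH$ containing $\cS(M_1)\cup \cS(M_2)$, so in particular it contains $M_1,M_2,N_1,N_2$. In a general linear complex the lines through any fixed point form a pencil. At $p$ this pencil contains the two distinct lines $M_1,M_2$, so it equals the pencil spanned by them. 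The hypothesis that $M_1,M_2,p\oplus q$ lie in a common pencil is therefore equivalent to $p\oplus q\in\cH$. Consequently, the pencil of lines of $\cH$ through $q$ contains $N_1,N_2$ and also $p\oplus q$, yielding (CP).

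For the converse ((CP) $\Rightarrow$ Clifford), fix any two distinct parallel classes $\cS_1,\cS_2$. For each point $p\in\cPV$, let $M_1(p)\in\cS_1$ and $M_2(p)\in\cS_2$ be the (unique) lines of the respective classes through $p$; these are distinct, so they span a plane and determine a pencil $\pi_p$ at $p$. Define
\begin{equation*}
    \cH := \{\,L\in\cLV \mid L\in\pi_p \mbox{~for some~} p\in L\,\}.
\end{equation*}
The decisive step, where (CP) enters, is the following: if $L\in\pi_{p_0}$ for some $p_0\in L$ and $q\in L\setminus\{p_0\}$, then $L\in\pi_q$. To see this, apply (CP) to $M_i:=M_i(p_0)$, $N_i:=M_i(q)$; by construction $M_i\parallel N_i$, $p_0=M_1\cap M_2$, $q=N_1\cap N_2$ are distinct, and $L=p_0\oplus q$ is in the pencil $\pi_{p_0}$, so (CP) forces $L$ into the pencil at $q$ spanned by $N_1,N_2$, which is $\pi_q$. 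Hence membership in $\cH$ can be tested at any point of the line.

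It then follows by a routine check that $\cH$ is a geometric hyperplane of the Grassmann space $\bGV$: it is a proper subset of $\cLV$ (no $\pi_r$ exhausts $\cL(r,V)$), and for any pencil $\Pi$ of lines at a point $r$, $\Pi\cap\cH=\Pi\cap\pi_r$ is a single line when $\Pi\neq\pi_r$ and all of $\Pi$ when $\Pi=\pi_r$. By the result cited in Section~\ref{sect:prelim}, $\cH$ is a linear complex. Since $\cS_1\subset\cH$ is a spread, Lemma~\ref{lem:special} rules out the special case, so $\cH$ is a general linear complex containing $\cS_1\cup\cS_2$. As $\cS_1,\cS_2$ were arbitrary distinct parallel classes, Theorem~\ref{thm:hyper} shows $\parallel$ is Clifford. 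The main obstacle is packaging the geometric content of (CP) into the invariance statement ``$L\in\pi_{p_0}\Rightarrow L\in\pi_q$'', which is precisely what makes the local pencils $\pi_p$ fit together into a global geometric hyperplane.
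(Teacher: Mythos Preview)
Your proof is correct and follows essentially the same route as the paper: both directions go through Theorem~\ref{thm:hyper}, and in the converse you build, from two parallel classes, the pencils $\pi_p=\cL\bigl(p,M_1(p)+M_2(p)\bigr)$ and use (CP) to show these pencils are consistent along each line. The only cosmetic difference is in how the converse is packaged: the paper defines the map $p\mapsto p^\pi:=M_1(p)+M_2(p)$ directly, reads (CP) as the symmetry $q\subset p^\pi\Rightarrow p\subset q^\pi$, and concludes that $\pi$ is a null polarity whose null lines form the desired general linear complex; you instead assemble the set $\cH$ of null lines first, verify it is a geometric hyperplane of $\bGV$, and then invoke the result from Section~\ref{sect:prelim} that geometric hyperplanes are linear complexes together with Lemma~\ref{lem:special}. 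The underlying idea---(CP) is exactly the self-polarity/invariance needed to globalise the local pencils---is the same in both.
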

\begin{proof}
Let $\parallel$ be Clifford. We consider lines $M_1$, $M_2$, $N_1$, $N_2$
subject to the assumptions in (CP). By Theorem~\ref{thm:hyper}, there is a
unique general linear complex of lines containing $\cS(M_1)\cup \cS(M_2)$. This
complex is the set of null lines of a null polarity $\pi$, say. So
$p^\pi=(M_1\cap M_2)^\pi=M_1+M_2$ and $q^\pi=(N_1\cap N_2)^\pi=N_1+N_2$. Now if
$M_1$, $M_2$, $p\oplus q$ are in a common pencil of lines then $q\subset p^\pi$
implies $p\subset q^\pi$, and this shows that $N_1$, $N_2$, and $p\oplus q$
belong to the pencil $\cL(q,q^\pi)$.
\par
For a proof of the converse we consider any two distinct parallel classes
$\cS_1$ and $\cS_2$. For each point $p$ in $\bPV$ we define $M_1(p)$ as the
only line satisfying $p\subset M_1(p)\in\cS_1$; the line $M_2(p)$ is defined
analogously. We obtain a well defined mapping $\pi$ of the point set $\cPV$
into the set of planes of $\bPV$ via
\begin{equation*}\label{}
    p \mapsto p^\pi:= M_1(p)+M_2(p) .
\end{equation*}
We claim that $q\subset p^\pi$ implies $p\subset\ q^\pi$ for all $p,q\in \cPV$:
This is trivially true when $p= q$ and immediate from (CP) otherwise.
Consequently, $\pi$ is a polarity of $\bPV$, which is null by its definition.
The set of null lines of $\pi$ is a general linear complex of lines \big(a
geometric hyperplane of $\bGV$\big) containing $\cS_1\cup\cS_2$. By virtue of
Theorem~\ref{thm:hyper}, the parallelism $\parallel$ is Clifford.
\end{proof}

\begin{rem}
The condition of crossed pencils can readily be translated to an
\emph{arbitrary}\/ three{\trenn}dimensional projective space $(\cP,\cL)$, and
it could be used to define when a parallelism of this space is \emph{Clifford}.
As in the proof of Theorem~\ref{thm:crossed}, the existence of such a Clifford
parallelism implies that the projective space $(\cP,\cL)$ admits a null
polarity, which in turn forces $(\cP,\cL)$ to be Pappian.
\end{rem}

\section{Conclusion}

By our investigation, which is far from being comprehensive, there is a one-one
correspondence between Clifford parallelisms and external planes to the Klein
quadric. We have not included several topics. Among these is the problem of
finding necessary and sufficient algebraic conditions for two parallel classes
of a given Clifford parallelism to be projectively equivalent, even though all
necessary tools can be found in the literature: In the setting from
Section~\ref{sect:Cliff-Klein} there are one-one correspondences among
(i)~quadratic extension fields of $F$ that are contained in $H$ (which are
precisely the lines of $\bPV$ through the point $F1$), (ii)~parallel classes of
${\lip}={\Cp}$, (iii)~solids through $C$, and (via $\perp$) (iv)~lines in the
plane $C^\perp$. More generally, any external line to the Klein quadric can be
linked directly with a quadratic extension field of $F$ and vice versa. We
refer to \cite{beut+u-93a}, \cite{cica+d+s-12a}, \cite{debr-10a},
\cite{debr-11a}, \cite{gulde+st-12a}, \cite{havl-94b}, \cite{havl-94a}, and
\cite{stro-08a} for a wealth of (overlapping) results that should settle the
issue. Also we have not incorporated the results from \cite{blunck+p+p-10a} and
\cite{havl-15a}, where Clifford parallelisms have been described by extending
the ground field. Finally, we are of the opinion that \emph{kinematic line
mappings}\/ and related work from \cite{brau-73a}, \cite{havl-83c},
\cite{lueb-79a}, and \cite{odeh-04a} could provide a good guideline for a
generalisation of our findings to other parallelisms arising from
\emph{kinematic spaces}; see \cite{karz+k-88a}, \cite{paso-10a}, and the
references therein.


\newcommand{\Dbar}{\makebox[0cm][c]{\hspace{-2.5ex}\raisebox{0.25ex}{-}}}
  \newcommand{\cprime}{$'$}

\noindent
Hans Havlicek\\
Institut f\"{u}r Diskrete Mathematik und Geometrie\\
Technische Universit\"{a}t\\
Wiedner Hauptstra{\ss}e 8--10/104\\
A-1040 Wien\\
Austria\\
\texttt{havlicek@geometrie.tuwien.ac.at}

\end{document}